\documentclass[11pt,a4paper]{article}
\usepackage[utf8]{inputenc}
\usepackage[english]{babel}
\usepackage{amsmath}
\usepackage{amsthm}
\usepackage{amsfonts}
\usepackage{amssymb}
\usepackage{graphicx}
\usepackage{booktabs}
\usepackage[hidelinks]{hyperref}
\usepackage{subcaption}
\usepackage{multirow}
\usepackage{tikz}
\usepackage{verbatim}
\usepackage[ruled,vlined]{algorithm2e}
\usepackage{enumitem}
\usepackage{float}
\usepackage{algorithmic}

\usetikzlibrary{arrows,shapes,positioning}
\tikzstyle{block} = [draw, rectangle, minimum width = 0.75cm, minimum height = 0.75cm]
\tikzstyle{sum} = [draw, circle, minimum size=.5cm, node distance=1.75cm]
\tikzstyle{input} = [coordinate]
\tikzstyle{output} = [coordinate]

\numberwithin{equation}{section}

\newtheorem{theorem}{Theorem}[section]

\newtheorem{lemma}{Lemma}[section]

\makeatletter
\newcommand{\startat}[1]{\setcounter{\@enumctr}{#1}%
	\addtocounter{\@enumctr}{-1}}
\makeatother

\def\R {{\mathbb R}}

\def\N {{\mathbb N}}

\def\cH {{\cal H}}

\def\cH {{\cal H}}

\def\hJ {\hat{J}}

\newcommand{\normiii}[1]{{\vert\kern-0.25ex\vert\kern-0.25ex\vert #1 
		\vert\kern-0.25ex\vert\kern-0.25ex\vert}}

\makeatletter
\newcommand{\mylabel}[2]{#2\def\@currentlabel{#2}\label{#1}}

\DeclareMathOperator*{\dive}{div}


\title{\bf Optimal design of equilibrium solutions of the Vlasov-Poisson system by an external electric field}
\author{A. Borz{\`i}
	\thanks{Institut f\"ur Mathematik, Universit\"at W\"urzburg, Emil-Fischer-Strasse 30,
		97074 W\"urzburg, Germany. e-mail: alfio.borzi@mathematik.uni-wuerzburg.de}
	\and 
	G. Infante
	\thanks{Dipartimento di Matematica e Informatica, Universit\`a della Calabria, 87036 Arcavacata di Rende, Cosenza, Italy
		e-mail: gennaro.infante@unical.it}
	\and
	G. Mascali
	\thanks{Dipartimento di Matematica e Informatica, Universit\`a della Calabria, 87036 Arcavacata di Rende, Cosenza, Italy, and INFN gruppo collegato Cosenza, Italy
		e-mail: giovanni.mascali@unical.it}
}
\date{\today}

\begin{document}

 \maketitle

 \begin{abstract}

A new optimization framework to design steady equilibrium solutions of the 
Vlasov-Poisson system by means of external electric fields is presented. 
This optimization framework requires the minimization of an ensemble functional 
with Tikhonov regularization of the control field 
under the differential constraint of a nonlinear 
elliptic equation that models equilibrium solutions of the Vlasov-Poisson system. 
Existence of optimal control fields and their characterization as solutions 
to first-order optimality conditions are discussed. Numerical approximations 
and optimization schemes are developed to validate the proposed framework.

 \end{abstract}
 
\paragraph*{Keywords:}{\small Optimal control theory, nonlinear Poisson--Boltzmann equations, Vlasov equation. }

\paragraph*{MSC:}{\small 49J15, 49J20, 49K15, 49M05, 65K10.}

 \bigskip
 	\section{Introduction}
 	\label{sec:intro}
 	
 	The need to confine and manipulate plasma appears in a multitude of applications 
 	ranging from plasma cooling \cite{Manfredi2012} to satellite electric propulsion 
 	\cite{Mazouffre2016} and energy production in fusion reactors \cite{Dunlap2021}. 
 	In particular, in many of these applications, there is the need to determine 
 	steady equilibrium configurations having specific properties useful for the 
 	given application. For this reason, many different strategies have been 
 	proposed and analyzed \cite{Doyle2021, Heumann2018,Holst2022}. 
 	
 	We would like to contribute to this field of research with the formulation 
 	of a novel optimization framework for optimal design of equilibrium 
 	configurations that we exemplify considering a 
 	Vlasov-Poisson system subject to an external electric control field. The main novelties  
 	of our approach are: 1) The introduction of an ensemble objective functional 
with a first-order Tikhonov regularization of the external control field sought; 
2) The choice of the differential constraint given by a nonlinear elliptic equation that models the 
equilibrium solutions of the Vlasov-Poisson system with a given external field; 
3) The complete theoretical analysis of the resulting optimization problem; 
4) The numerical validation of the proposed optimal design procedure.

The Vlasov-Poisson system governs the transport of a particle distribution function 
$f$ under the action of a self-consistent force whose potential is 
modelled by a Poisson equation corresponding to electrostatic forces between charged particles with the same sign. Existence and uniqueness 
of classical solutions has been proved for dimensions $n \le 3$ under mild assumptions on the initial data in \cite{DiPerna1988,Perthame1991,Pfaffelmoser1992}, 
and weak solutions are discussed in \cite{Arsenev1975,Horst1984}. 
In many situations, the Vlasov-Poisson system represents an accurate alternative 
to the Boltzmann description of gases of charged particles whose 
Coulomb interaction cannot be considered to be of short-range type. 
We choose the Vlasov-Poisson system since it is a valid model in describing various phenomena in plasma physics and therefore it is representative for the class 
of problems considered in our work. 

Specifically, we consider steady equilibria solutions of 
the Vlasov-Poisson system that are characterized by the solution to the 
nonlinear elliptic Poisson-Boltzmann equation, see \cite{Bavaud1991,Carrillo1998,Desvillettes1991,GoLi1989}, 
including the presence of an external electric control field. Our purpose is 
to design this control field such that the corresponding steady equilibrium distribution 
concentrates on some selected regions of the computational domain. 

{Our work is timely, considering very recent efforts in the investigation of optimal 
	control problems governed by the Vlasov-Poisson system. In particular in \cite{Albi2024,Einkemmer2024} the focus is in constructing feedback control mechanisms with electric and magnetic fields that ensure dynamical 
	stability around closed trajectories of desired plasma dynamics; see also \cite{BartschB2024} for a related problem. Further, we refer 
	to \cite{BartschC2024,Knopf2018,Weber2021} for recent numerical and theoretical results concerning open-loop time-dependent optimal control problems 
	governed by Vlasov-Poisson models. We remark that all these works propose 
	methodologies that allow to drive the system to approximately reach a desired target 
	configuration. In this context, our methodology provides the tools for designing 
	target states that are equilibria under the action of a fixed external field.  
}

For our purpose, we formulate a constrained optimization problem, whose differential 
constraint is the nonlinear Poisson-Boltzmann equation with the external control 
field to be optimized by minimizing a regularized ensemble objective functional. 
Our choice of an ensemble functional is dictated by the statistical nature 
of the underlying Vlasov-Poisson system and differs considerably from 
$L^2$ error tracking functionals as the one considered in, e.g., \cite{Heumann2018,Holst2022}, 
where a desired distribution profile must be provided that could be 
unattainable for the given physical setting. In fact, our ensemble functional 
corresponds to a statistical expected value functional of a valley-potential 
in which we wish to concentrate the particle density. This class 
of functionals was introduced in \cite{Brockett2012} and recently 
applied and analyzed in \cite{BartschNastasi2021, Bartsch2019,Bartsch2021}. 
Moreover, in order to guarantee well posedness of our optimization 
problem and the appropriate regularity of the resulting optimal 
control field, our objective functional includes a first-order Tikhonov 
regularization term; see, e.g., \cite{Aster2013} for a discussion 
on Tikhonov regularization of different orders. 
{We remark that our framework is able to accommodate other objective 
	functionals that involve different 
	moments (e.g., mean, variance, etc.) of the distribution function solving the Poisson-Boltzmann equation. }
	
In this work, we theoretically investigate the proposed optimization 
problems proving existence of optimal control fields. Further, we 
discuss the characterization of these solutions by first-order optimality 
conditions that result in an optimality system consisting of 
the Poisson-Boltzmann model, its optimization adjoint, and 
an optimality gradient condition. In this way, we are able to identify 
the gradient of the objective functional subject to the differential 
constraint, which is required in order to construct a numerical 
optimization procedure aiming at solving the optimality system. 
For this purpose, since the control field is sought in the Sobolev 
space of $H^1$ functions, we discuss the construction of the reduced 
gradient in this Hilbert space.

 	In next section, we illustrate the Vlasov-Poisson system that models the 
 	evolution of the charged particles distribution function in phase space. Further, 
 	we derive the nonlinear Poisson-Boltzmann elliptic equation for steady equilibrium distributions corresponding to a given external electric field. In Section \ref{sec:control}, we identify this electric 
 	field as our control function and formulate our optimal external electric field 
 	problem. For this purpose, we introduce an ensemble 
 	objective functional, including a $H^1$ Tikhonov regularization term, which 
 	has to be minimized subject to the differential constraint given by the 
 	nonlinear elliptic equation. In this section, the regularity of the 
 	control-to-state map is discussed in detail, and existence 
 	of optimal solutions is proved. In Section \ref{sec-optsys}, we discuss 
 	the characterization of optimal control fields by first-order optimality 
 	conditions. In particular, we identify the reduced gradient that is used 
 	in our iterative solution procedure. 
 	
 	In Section  \ref{sec-gradient}, we discuss the numerical implementation of our optimization framework that requires 
the numerical approximation of the Poisson-Boltzmann equation 
and of its optimization adjoint, and the assembling of the $H^1$ gradient 
that is used in a gradient-based iterative solution procedure. 
Further, we illustrate a nonlinear conjugate gradient algorithm with $H^1$ gradient
 	to compute the optimal control fields. In Section \ref{sec-numexp}, we 
 	report results of numerical experiments that demonstrate the ability 
 	of our optimization framework to manipulate the steady equilibrium 
 	plasma configuration by an external electric field. A conclusion Section 
 	completes this work.

 	 	\section{The Vlasov-Poisson system and \\its equilibrium solutions}
 	\label{sec:model}
 	
 	The Vlasov equation \cite{Vlasov1967} is a partial differential equation 
 	that models the evolution in the phase space of the distribution function of a plasma of charged interacting particles. In the zero-magnetic field limit, 
 	the Vlasov equation for charged particles subject to a given external electric field reads
 	\begin{equation}
 	\partial_t f  + v \cdot \nabla_x f + \big( E(x,t) + E_0(x) \big) \cdot \nabla_v f =0 ,
 	\label{eVlasov01}
 	\end{equation}
 	where $f=f(x,v,t)$ denotes the distribution function, with $x$, $v$ and $t$ respectively being position, velocity and time, $E=E(x,t)$ represents the self-consistent (internal) electric field generated by the particles, and 
 	$E_0=E_0(x)$ is a stationary external electric field. 
 	
 	We are dealing with the case of positively charged particles, for which the self-consistent field is defined as the solution to the 
 	following Poisson problem
 	\begin{equation}
 	\dive E (x,t) = \rho(x,t) , \qquad   \rho(x,t) = \int_{\R^n} f(x,v,t) \, dv , 
 	\label{eExt01}
 	\end{equation}
 	for any fixed time $t \ge0$; we denote with $\dive$ the divergence in $x$, 
 	while $\rho$ is the spatial particle density, and $n$ is the spatial dimension. 
 	
 	For both electric fields $E$ and $E_0$, we assume that there exist 
 	differentiable potentials $U$ and $u$, respectively, such that it holds 
 	\begin{equation}
 	 	     E (x,t) = - \nabla_x U(x,t) , \qquad E_0 (x) = - \nabla_x u (x) .
\label{eEnablaU01}
 	\end{equation}
 We also assume that the particles of the plasma are spatially localized within a 
 	bounded and convex set $\Omega \subset \R^n$ with smooth boundary, 
 	and the mean velocity of this system of particles is zero. 
 
It is well known \cite{Bavaud1991,Desvillettes1991,Dolbeault1991} that 
these conditions are sufficient for proving existence of stationary 
Maxwellian solutions to \eqref{eVlasov01} with the following structure 
\begin{equation}
f_s(x,v) = \frac{1}{(2 \pi T)^{n/2}} \, \rho(x) \, e^{- \frac{|v|^2}{2T}} ,
\label{eDensStat01}
\end{equation}
where $T>0$ denotes the constant temperature of the plasma; 
{we assume particles of unit mass and the Boltzmann constant 
is set equal to 1.}

Notice that \eqref{eDensStat01} is completely specified by $\rho$ 
that satisfies the following system
$$
\nabla_x \rho = \frac{1}{T} \, (E  + E_0 ) \, \rho, \qquad \dive E = \rho \, ;
$$
see \eqref{eVlasov01} and \eqref{eExt01}. Thus, using \eqref{eEnablaU01}, 
it appears sufficient to find $\rho$ that satisfies the following equations
\begin{equation}
\nabla_x \log \rho = - \frac{1}{T} \, ( \nabla_x U + \nabla_x u ) ,
 \qquad -\Delta U = \rho ,
 \label{eRhoE01}
\end{equation}
where $\Delta$ represents the Laplacian operator in $x$. With the first 
equation in \eqref{eRhoE01}, we arrive at a formula for $\rho$ as a function of the potentials, 
which including $L^1$ normalization, is given by 
\begin{equation}
\rho(x) = \frac{e^{-(U(x)+u(x))/T}}{\int_\Omega e^{-(U(x)+u(x))/T }\, dx}.
\label{eRHO01}
\end{equation}
Clearly, in the stationary case the dependence on time of $E$, respectively $U$, 
is also dropped. We remark that, with the chosen normalization, we 
can interpret $\rho$ as a probability density function. 

With the second 
equation in  \eqref{eRhoE01}, and \eqref{eRHO01}, we obtain the 
nonlinear Poisson-Boltzmann equation for $U$, given $u$, as follows: 
\begin{equation}
-\Delta U (x) =  \frac{e^{-(U(x)+u(x))/T}}{\int_\Omega e^{-(U(x)+u(x))/T }\, dx}.
\label{eEqU01}
\end{equation}

It could be convenient to introduce {$\rho_0 (x):= e^{-u(x)/T}$}, and with 
this setting, we get
\begin{equation}
-\Delta U (x) = \frac{\rho_0 (x) \, e^{-U(x)/T}}{\int_\Omega \rho_0 (x) \, e^{-U(x)/T }\, dx} .
\label{eEqU02}
\end{equation}

Further, we can take $\Omega$ sufficiently large so that 
$U=0$ can be assumed on $\partial \Omega$. For simplicity, 
in the following we also set $T=1$. 
 
The existence and uniqueness of solutions to \eqref{eEqU01} with 
homogeneous Dirichlet boundary conditions has been studied with variational methods in \cite{Desvillettes1991,GoLi1989,Dolbeault1991}. In the following, for the sake of completeness, 
we give a sketch of the proof by means of an alternative approach.
{
\begin{theorem}
	Let $\rho_0\in L^1(\Omega)$, then \eqref{eEqU01} with homogeneous Dirichlet boundary conditions has a solution $U\in H^2(\Omega)$.
\end{theorem}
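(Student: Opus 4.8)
One route that sidesteps the variational machinery of the cited works is a fixed-point/parameter argument exploiting the monotone structure of the nonlinearity. With $T=1$ and $\rho_0=e^{-u}$, equation~\eqref{eEqU01} reads $-\Delta U=\rho_0\,e^{-U}\big/\!\int_\Omega\rho_0\,e^{-U}\,dx$, and the point is that, for a fixed normalization, the right-hand side is a \emph{nonincreasing} function of $U$ and a \emph{nonnegative} $L^1$ datum of unit mass. First I would decouple the normalization: for each $\lambda>0$ consider the auxiliary Dirichlet problem $-\Delta V=\lambda\,\rho_0\,e^{-V}$ in $\Omega$, $V=0$ on $\partial\Omega$. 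Since $\rho_0\ge0$, the constant $0$ is a subsolution; and if $w$ solves $-\Delta w=\rho_0$ with $w=0$ on $\partial\Omega$ (so $w\ge0$ by the maximum principle), then $\lambda w$ is a supersolution, because $-\Delta(\lambda w)=\lambda\rho_0\ge\lambda\rho_0\,e^{-\lambda w}$. The method of sub- and supersolutions (in the appropriate weak, $L^1$-datum formulation) then produces a solution $V_\lambda$ with $0\le V_\lambda\le\lambda w$; strict monotonicity of the nonlinearity in $V$ gives uniqueness and, via comparison, that $\lambda\mapsto V_\lambda$ is nondecreasing and continuous.

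The second step is to tune $\lambda$. Put
\[
\Phi(\lambda):=\lambda\int_\Omega\rho_0\,e^{-V_\lambda}\,dx=\int_\Omega(-\Delta V_\lambda)\,dx=\int_{\partial\Omega}(-\partial_\nu V_\lambda)\,dS .
\]
I would check that $\Phi$ is continuous, that $\Phi(0^+)=0$ (since $0\le V_\lambda\le\lambda w\to0$), and that $\Phi$ is nondecreasing (because $V_{\lambda_2}-V_{\lambda_1}\ge0$ in $\Omega$ and vanishes on $\partial\Omega$, so its outward normal derivative there is nonpositive). Finally $\Phi(\lambda)\to\infty$ as $\lambda\to\infty$: otherwise $\Phi$ stays bounded, hence $\{V_\lambda\}$ is bounded in $W^{1,q}_0(\Omega)$, so it increases to a finite $V_\infty\in L^1(\Omega)$, and then $\lambda\rho_0\,e^{-V_\lambda}\ge\lambda\rho_0\,e^{-V_\infty}$ together with monotone convergence forces $\Phi(\lambda)\to\infty$, a contradiction. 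By the intermediate value theorem there is $\lambda^{*}>0$ with $\Phi(\lambda^{*})=1$; then $U:=V_{\lambda^{*}}$ satisfies $-\Delta U=\lambda^{*}\rho_0\,e^{-U}$ with $\lambda^{*}\int_\Omega\rho_0\,e^{-U}\,dx=1$, i.e. it solves~\eqref{eEqU01} with homogeneous Dirichlet data.

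For the regularity I would use that $U\ge0$, hence $0\le-\Delta U=\lambda^{*}\rho_0\,e^{-U}\le\lambda^{*}\rho_0$, and then invoke elliptic regularity for the Dirichlet Laplacian on the smooth convex domain $\Omega$ to land $U$ in $H^2(\Omega)$. I expect this last step to be the main obstacle: when $\rho_0$ is only in $L^1(\Omega)$ the datum $\rho_0\,e^{-U}$ is merely $L^1$, so the Stampacchia/duality theory a priori gives only $U\in W^{1,q}_0(\Omega)$ for $q<n/(n-1)$, and reaching $H^2$ needs the bound $0\le e^{-U}\le1$ to be combined with more integrability of $\rho_0$ or with a bootstrap exploiting the nonlinearity (a Moser-type iteration). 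A secondary, purely technical, difficulty is to run the sub-/supersolution scheme and the monotonicity and continuity of $\lambda\mapsto V_\lambda$ directly in the $L^1$-datum setting; with a datum in $L^p$ for $p$ large enough these points become routine.
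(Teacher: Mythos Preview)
Your route is genuinely different from the paper's. The paper's sketch runs a linearized iteration: with $U_0\equiv0$, it solves the sequence of \emph{linear} Dirichlet problems $-\Delta U_k=\Phi(U_{k-1},u)$, observes that each right-hand side $f_k$ has unit $L^1$ norm, asserts from this a uniform $L^2$ bound on the $f_k$, invokes $H^2$ elliptic regularity for a uniform bound $\|U_k\|_{H^2}\le C$, and extracts a weakly convergent subsequence whose limit solves the nonlinear equation. Your scheme---sub/supersolutions for the unnormalized problem $-\Delta V=\lambda\rho_0 e^{-V}$, then a shooting argument on the mass $\Phi(\lambda)=\lambda\int_\Omega\rho_0 e^{-V_\lambda}$---makes the roles of positivity, monotonicity of the nonlinearity, and the maximum principle explicit, and cleanly separates existence from recovery of the normalization constraint. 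The paper's iteration buys brevity and approximants that live directly in $H^2$; your argument buys a transparent audit trail and, crucially, an honest accounting of the regularity step.

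On that last point you have correctly located the real difficulty, and it is one the paper's sketch shares. From $\|f_k\|_{L^1}=1$ alone one cannot deduce a uniform $L^2$ bound (the continuous embedding $L^2(\Omega)\hookrightarrow L^1(\Omega)$ goes the wrong way for that inference), so the passage to $H^2$ in the paper is just as incomplete as in your outline when $\rho_0$ is merely $L^1$. What rescues both arguments in the paper's actual setting is that $\rho_0=e^{-u}$ with $u\in D(\Omega)$: in each of the three cases treated later, $u$ is either bounded by hypothesis or H\"older continuous via Sobolev embedding, so $\rho_0\in L^\infty(\Omega)$. Then your own estimate $0\le-\Delta U\le\lambda^*\rho_0$ already places $-\Delta U$ in $L^\infty\subset L^2$, and $U\in H^2(\Omega)$ follows by standard elliptic regularity. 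Under the bare hypothesis $\rho_0\in L^1$ stated in the theorem, neither argument reaches $H^2$ without additional input, so your caveat is well placed.
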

\begin{proof}
Let us start introducing {$U_0\equiv 0$}, and $U_k$, $k=1, 2, \dots$, in such a way that
$$
-\Delta U_k(x)= \frac{e^{-(U_{k-1}(x)+u(x))/T}}{\int_\Omega e^{-(U_{k-1}(x)+u(x))/T }\, dx}=:f_k(x).
$$
The existence of these functions is assured by results on linear second order elliptic equations \cite{Evans2010}; moreover, we have $||f_k||_{L^1(\Omega)}=1$, $k\in \mathbb{N}$. Note that,
since $L^2(\Omega)$ is continuously embedded into  $L^1(\Omega)$, it follows that the sequence $\{f_k\}_{k=1}^\infty$ is bounded in $L^2(\Omega)$. Regularity results on linear second-order elliptic equations \cite{Evans2010} imply that there exists a constant $C>0$ such that $||U_k||_{H^2(\Omega)}\le C$, $k\in \mathbb{N}$. Therefore there exists a function $U\in H^2(\Omega)$ such that $U_k\rightharpoonup U$ in $H^2(\Omega)$, after extraction of a subsequence, if necessary. Passing to the limit for $k\rightarrow +\infty$, it is possible to prove that $U$
solves the equation under consideration.
\end{proof}}   
 
Concerning the uniqueness of solutions to our elliptic problem, we exploit the same technique used in \cite{GoLi1989} for proving the uniqueness in the case without an external field.  {For the sake of clarity, we premise the following 
\begin{lemma}
	Let $U_1$ and $U_2$ be two solutions of \eqref{eEqU01}, with homogeneous Dirichlet boundary conditions, and set
	$$
I_i:=\int_\Omega \exp{(-u-U_i)}\,dx, \,\, i=1,2. 
$$
If $I_1\ge I_2$ then $U_1(x)\le U_2(x), \forall x\in \Omega.$ In particular if $I_1=I_2$, then $U_1(x)= U_2(x), \forall x\in \Omega.$
\end{lemma}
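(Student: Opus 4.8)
The plan is to study the difference $w := U_1 - U_2$, which lies in $H^1_0(\Omega)$ and satisfies, in the weak sense,
$$
-\Delta w = \frac{e^{-(u+U_1)}}{I_1} - \frac{e^{-(u+U_2)}}{I_2}.
$$
Before anything else I would record that each $I_i$ is a finite, strictly positive number: the integrand $e^{-(u+U_i)} = \rho_0\, e^{-U_i}$ is positive and, since the solutions furnished by the preceding theorem are bounded on $\overline{\Omega}$ (they are continuous there by Sobolev embedding, the relevant dimensions being $n\le 3$), it is dominated by a constant times $\rho_0 \in L^1(\Omega)$; hence $0 < I_2 \le I_1 < \infty$ and the two ratios above are well defined.

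The heart of the argument is a sign analysis on the positivity set $\Omega^+ := \{x \in \Omega : w(x) > 0\}$, in the spirit of the technique in \cite{GoLi1989}. On $\Omega^+$ we have $U_1 > U_2$, hence $e^{-U_1} < e^{-U_2}$ pointwise, and combining this with $1/I_1 \le 1/I_2$ (which is exactly the hypothesis $I_1 \ge I_2$) gives
$$
\frac{e^{-(u+U_1)}}{I_1} \le \frac{e^{-(u+U_1)}}{I_2} < \frac{e^{-(u+U_2)}}{I_2} \qquad \text{on } \Omega^+,
$$
i.e. $-\Delta w < 0$ there. I would then test the weak equation for $w$ against $w^+ := \max(w,0) \in H^1_0(\Omega)$, which is supported in $\overline{\Omega^+}$; using the standard identity $\nabla w^+ = \nabla w\,\mathbf{1}_{\{w>0\}}$ a.e., this yields
$$
\int_\Omega |\nabla w^+|^2\, dx = \int_{\Omega^+} \Big(\frac{e^{-(u+U_1)}}{I_1} - \frac{e^{-(u+U_2)}}{I_2}\Big)\, w^+\, dx \le 0.
$$
The left-hand side is nonnegative, so $\nabla w^+ = 0$ a.e.; since $w^+ \in H^1_0(\Omega)$, this forces $w^+ \equiv 0$, that is $U_1(x) \le U_2(x)$ for all $x \in \Omega$. (Equivalently, one may invoke the weak maximum principle for the subharmonic function $w$ on $\Omega^+$, noting that $w = 0$ on $\partial\Omega^+$.)

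The equality statement then follows by symmetry: if $I_1 = I_2$, then both $I_1 \ge I_2$ and $I_2 \ge I_1$ hold, so applying the first part twice gives $U_1 \le U_2$ and $U_2 \le U_1$ on $\Omega$, whence $U_1 = U_2$. The only point requiring some care is the functional-analytic justification of using $w^+$ as a test function together with the identity $\nabla w^+ = \nabla w\,\mathbf{1}_{\{w>0\}}$ — standard facts about Sobolev functions, but worth stating precisely — and the finiteness of the $I_i$; the sign chain itself, which is where the hypothesis $I_1 \ge I_2$ enters, is elementary.
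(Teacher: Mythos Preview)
Your argument is correct. You set $w=U_1-U_2$, observe that on $\Omega^+=\{w>0\}$ the hypothesis $I_1\ge I_2$ combined with $e^{-U_1}<e^{-U_2}$ forces $-\Delta w<0$, and then test against $w^+$ to conclude $\int_\Omega|\nabla w^+|^2\le 0$, hence $w^+\equiv 0$. The equality case follows by symmetry. All the ingredients you flag (finiteness and positivity of $I_i$, Stampacchia's identity $\nabla w^+=\nabla w\,\mathbf{1}_{\{w>0\}}$, membership $w^+\in H^1_0(\Omega)$) are standard and correctly invoked.

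The paper takes a different route: it argues pointwise at an interior maximum $\bar x$ of $U_1-U_2$, using the classical second-derivative test $-\Delta(U_1-U_2)(\bar x)\ge 0$ and deriving a contradiction from the sign of $-I_1\Delta U_1(\bar x)+I_2\Delta U_2(\bar x)$; it also treats the strict case $I_1>I_2$ and the equality case $I_1=I_2$ by separate (parallel) applications of that reasoning. Your variational argument has the advantage of working directly in the weak $H^1$ framework, with no need to justify pointwise evaluation of $\Delta U_i$ at $\bar x$ (the paper only has $U_i\in H^2(\Omega)$ a priori, so its pointwise step implicitly relies on a bootstrap to classical regularity). The paper's approach, on the other hand, is closer in spirit to the original argument of Gogny--Lions \cite{GoLi1989} and makes the mechanism of the comparison principle very transparent at the level of the equation.
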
	
\begin{proof}
Notice that $-\Delta U_i>0,\,\,i=1,2$, since $\frac{\exp{(-u-U_i)}}{I_i}>0,\,\, i=1,2$, therefore $U_1$ and $U_2$ are concave. Suppose that $I_1>I_2$
and {$\max_{x\in \Omega} \Big(U_1-U_2\Big)>0$}, and let $\overline x\in \Omega$ be a point of maximum for $U_1-U_2$, then 
by the necessary condition for local maximum $-\Delta \Big(U_1-U_2\Big)(\overline x)\ge 0.$ Since $I_1>I_2,$ we have
$$
\Bigg(-\frac{I_1}{I_2}\Delta U_1+\Delta U_2\Bigg)(\overline x)\ge \Bigg(-\Delta U_1+\Delta U_2\Bigg)(\overline x)\ge 0.
$$
On the other hand
$$
\Bigg(-I_1 \Delta U_1+ I_2 \Delta U_2\Bigg)(\overline x)=
\exp{(-u(\overline x))}\Bigg[\exp{(-U_1(\overline x))}-\exp{(-U_2(\overline x))}\Bigg] <0,
$$
which is in contradiction with the previous inequality. \\
We have shown that if $I_1>I_2$, then $U_1(x)\le U_2(x),\,\,\forall x\in \Omega.$ By the same reasoning, one can prove that if $I_1=I_2$,  then 
$U_1(x)=U_2(x), \,\,\forall x \in \Omega$.
\end{proof}
Now, we are in a condition to prove the uniqueness. 
\begin{theorem}
	Equation \eqref{eEqU01}, with homogeneous Dirichlet boundary conditions, has a unique solution.
\end{theorem}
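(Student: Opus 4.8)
The plan is to argue by contradiction. Suppose $U_1$ and $U_2$ are two solutions of \eqref{eEqU01} with homogeneous Dirichlet data, and order them so that $I_1 \ge I_2$ (the opposite case is symmetric). If $I_1 = I_2$, the Lemma immediately gives $U_1 = U_2$; so assume $I_1 > I_2$. By the Lemma this forces $w := U_2 - U_1 \ge 0$ in $\Omega$, with $w = 0$ on $\partial\Omega$, and moreover $w \not\equiv 0$, since $w \equiv 0$ would give $U_1 = U_2$ and hence $I_1 = I_2$. I set $f_i := e^{-(u + U_i)}/I_i$, so that $-\Delta U_i = f_i$ and $\|f_i\|_{L^1(\Omega)} = 1$ for $i = 1, 2$.

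The crucial step is an a priori bound on $w$ via the maximum principle, in the same spirit as the proof of the Lemma. Since $w \ge 0$, vanishes on $\partial\Omega$, and is not identically zero, it attains a positive maximum at some interior point $\overline x \in \Omega$. The necessary condition $-\Delta w(\overline x) \ge 0$ reads $-\Delta U_2(\overline x) \ge -\Delta U_1(\overline x)$, i.e.\ $f_2(\overline x) \ge f_1(\overline x)$; dividing by $e^{-u(\overline x)} > 0$ and rearranging gives $\frac{I_2}{I_1}\, e^{U_2(\overline x) - U_1(\overline x)} \le 1$. Because $\overline x$ maximises $w = U_2 - U_1$, for every $x \in \Omega$ we then obtain
$$
\frac{f_1(x)}{f_2(x)} = \frac{I_2}{I_1}\, e^{\,U_2(x) - U_1(x)} \le \frac{I_2}{I_1}\, e^{\,U_2(\overline x) - U_1(\overline x)} \le 1 ,
$$
hence $f_1 \le f_2$ pointwise in $\Omega$.

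To conclude, I would integrate this inequality: since $f_1 \le f_2$ while $\int_\Omega f_1 = \int_\Omega f_2 = 1$, we must have $f_1 = f_2$ a.e.\ in $\Omega$. Then $U_1$ and $U_2$ solve one and the same linear Poisson problem $-\Delta U = f_1$ with homogeneous Dirichlet data, and uniqueness for the linear elliptic equation (cf.\ \cite{Evans2010}) yields $U_1 = U_2$ in $\Omega$. In particular $I_1 = I_2$, contradicting $I_1 > I_2$. Therefore $I_1 = I_2$ necessarily holds, and the Lemma gives $U_1 = U_2$.

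The one point that needs a little care is the pointwise use of the maximum principle: a priori $U_1, U_2 \in H^2(\Omega)$, so before evaluating $-\Delta w$ at an interior maximum I would first bootstrap the regularity. The right-hand sides $f_i$ are bounded (for $n \le 3$ one has $H^2(\Omega) \hookrightarrow C(\overline\Omega)$, and $u$ is a fixed smooth potential), so $U_i \in W^{2,p}(\Omega)$ for all $p$, and H\"older continuity of $u$ then yields $U_i \in C^2(\Omega)$ by Schauder estimates, which legitimises the argument at $\overline x$. Everything else — the reduction to $I_1 > I_2$, the integration step, and the appeal to linear uniqueness — is routine.
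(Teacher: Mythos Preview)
Your proof is correct and follows essentially the same route as the paper's: reduce via the Lemma to the case $I_1>I_2$, use the Lemma again to obtain $U_1\le U_2$, examine the interior extremum of $U_2-U_1$, and exploit the sign of the Laplacian there together with the extremality of $\bar x$. The only cosmetic difference is in the closing step---the paper derives a direct sign contradiction at $\bar x$ (splitting into the equality and strict-inequality cases), whereas you propagate the bound at $\bar x$ to the global pointwise inequality $f_1\le f_2$ and then invoke the common $L^1$ normalization; your regularity remark is a welcome addition that the paper leaves implicit.
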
	
\begin{proof}
By contradiction, let $U_1(x)$ and  $U_2(x)$ be two solutions of \eqref{eEqU01}, and
without loss of generality, let us suppose that $I_1>I_2$. Then, as shown in the previous Lemma, we have $U_1(x)\le U_2(x)$.
Suppose that $\min_{x\in \Omega} \Big(U_1-U_2\Big)<0$ and let $\overline x\in \Omega$ be a point of minimum, we 
have, by the necessary condition of local minimum 
\begin{equation}
-\Delta\Big(U_1-U_2\Big)(\overline x)\le 0.
\label{ine0}
\end{equation}
We have also
\begin{equation}
-\Delta\Big(U_1-U_2\Big)(\overline x)=
\exp{(-u(\overline x))}\Bigg[\frac{\exp{(-U_1(\overline x))}}{I_1}-\frac{\exp{(-U_2(\overline x))}}{I_2}\Bigg].
\label{ine1}
\end{equation}
Since
$$
\exp{(-u(x) -U_1(x)+U_1(\overline x))}\le \exp{(-u(x) -U_2(x)+U_2(\overline x))},
$$
the second factor in the right-hand side of \eqref{ine1} must satisfy the following inequality
\begin{eqnarray*}
&& \frac{\exp{(-U_1(\overline x))}}{I_1}-\frac{\exp{(-U_2(\overline x))}}{I_2}
=\frac{1}{\int_\Omega\exp{(-u(x) -U_1(x)+U_1(\overline x))}\,dx}\\
&-&\frac{1}{\int_\Omega\exp{(-u(x)-U_2(x)+U_2(\overline x))}\,dx}\ge 0.
\end{eqnarray*}
There are two possibilities.
\begin{itemize}
	\item The equality holds, then $(U_2-U_1)(x)=(U_2-U_1)(\overline x)$, $\forall x\in \overline \Omega$, and we have a contradiction since
	$(U_2-U_1)(x)=0$ on $x\in \partial \Omega.$
	\item The strict inequality holds, and we again have a contradiction with \eqref{ine0}.
\end{itemize}
This concludes the proof.
\end{proof}}

\section{Optimal control of equilibrium distributions}
\label{sec:control}
Our governing model is the nonlinear elliptic equation \eqref{eEqU01} 
with homogeneous Dirichlet boundary conditions. The solution 
of this boundary-value problem for a given $u$ belonging to a suitable space $D(\Omega)$ to be specified below, defines the map
$$
u \mapsto U.
$$
In turn, the function above and \eqref{eRHO01} define the composed map
$$
u \mapsto \rho = \Phi(U,u), 
$$
where 
\begin{equation}
\Phi(U,u):= \frac{e^{-(U+u)}}{\int_\Omega e^{-(U(x)+u(x))}\, dx} .
\label{eRHO02}
\end{equation}

Our purpose is to formulate optimal control problems that allow us
to construct external potential fields $u$, acting on the plasma, such 
that a distribution function with some desired properties is obtained. For this reason we consider
an objective functional belonging to the class of ensemble 
cost functionals; see, e.g., \cite{BartschNastasi2021, Bartsch2019,Bartsch2021} 
and references therein. Specifically, we focus on the following 
objective functional 
\begin{equation}
	J(\rho, u )= \int_\Omega V(x) \, \rho(x) \, dx +   \frac{\alpha}{2} \, \| u \|^2_{H^1(\Omega)} .
	\label{eJ02}
\end{equation}
In \eqref{eJ02}, we have a weight $\alpha >0 $ that multiplies the $H^1$ cost of the potential $u$ as in \cite{Annunziato2021}. This choice is motivated by the fact that 
a potential in $H^1(\Omega)$ is sought. 
The first term in \eqref{eJ02} models the requirement that, 
by minimization, the density $\rho$ concentrates along the bottom of the `valley' 
function $V$, which we assume to be measurable and bounded. In particular, if  we choose 
\begin{align*}
	V(x) = - A \, \exp\left(-\frac{|x-x_0|^2}{2\, a^2} \right), 
	\qquad A >0, \quad a >0.
\end{align*}
the bottom is at $x=x_0$.
With the same reasoning, it is also possible to aim at designing a multimodal 
$\rho$, or one distributed on a ring, etc..

Another possible objective functional could be given by  
\begin{equation}
	J(\rho, u )= \frac{1}{2} G(\rho,\rho_d) +   \frac{\alpha}{2} \, \| u \|^2_{H^1(\Omega)} ,
	\label{eJ01}
\end{equation}
where $G(\rho,\rho_d)$ denotes an appropriate {measure of the difference} between 
$\rho$ and a desired $L^1$-normalized target distribution $\rho_d$. 
In particular, one could focus on the Kullback-Leibler (KL) 
divergence given by \cite{KullbackLeibler1951}
\begin{equation}
	G (\rho, \, \rho_d ) :=
	\int_\Omega   \rho (x) \, \log(\rho(x) / \rho_d (x))     \, dx .
	\label{gKL}
\end{equation}
We remark that our framework can be extended with only 
minor modification to 
accommodate the KL functional given above.

Now, we focus on the case with \eqref{eJ02} and formulate our class of optimal control problems:
\begin{align}
 \min J(\rho,u) & \nonumber \\
 \mbox{ s.t.}  \quad \rho & =\Phi(U,u),   \nonumber\\
                 - \Delta U &= \Phi(U,u) \qquad \mbox{ in } \Omega, \label{contrpr} \\
                 U &=0  \qquad \qquad \mbox{ on } \partial \Omega.  \nonumber
\end{align}

As said, the solution 
to the boundary-value problem in \eqref{contrpr} for a given $u\in D(\Omega)$ 
defines the so-called control-to-state map \cite{Lions1971,Troeltzsch2010}
\begin{equation}
S: D(\Omega) \rightarrow L^2(\Omega), \qquad  u \mapsto U=:S(u).
\label{eC2Smap}
\end{equation}

Regarding the space of functions $D(\Omega)$, we distinguish the following three cases: 
 {\begin{enumerate}
	\item Space dimension $n=1$, we take
	$$
	D(\Omega)=D_1(\Omega):=	
	H^1_0(\Omega)=\{v\in H^1(\Omega);\,v|_{\partial \Omega}=0\},
	$$		
	where the equality to zero at the boundary has to be intended in a generalized sense.
\end{enumerate}  
If $n=2$ or $n=3$, we investigate two possible strategies:
\begin{enumerate} \startat{2}
	\item  $n=2$ or $n=3$ first strategy, we consider uniformly bounded external potentials, that is, we choose
	\begin{eqnarray*}
		D(\Omega)=D_2(\Omega):=	\{v\in H^1_0(\Omega); \,\, M_1\le v\le M_2,\,\,{\textrm{a.e.}} \,\,{\rm in}\,\,\Omega\}, 
	\end{eqnarray*}
	with fixed $M_1, M_2\in\mathbb{R};$
\item  $n=2$ or $n=3$ second strategy, we require more regularity on the control potential, that is we assume
\begin{eqnarray*}
	D(\Omega)=D_3(\Omega):=	
	H^2_0(\Omega)=\{v\in H^1_0(\Omega) \cap H^2(\Omega); \nabla_x v|_{\partial \Omega}=0\},
\end{eqnarray*}
and correspondingly we change $||u||_{H^1(\Omega)}$ into $||u||_{H^2(\Omega)}$ in \eqref{eJ02}.	
\end{enumerate} }

In order to analyse the control-to-state map \eqref{eC2Smap}, we reformulate our governing model in terms 
of the Green's function of the Laplace operator in the given domain and with
Dirichlet boundary conditions. We have
$$
U(x)= \int_\Omega k(x,y) \,  \Phi(U,u)(y) \, dy ,
$$
where $k(\cdot , \cdot )$ denotes the Green's function. 

{
	\begin{theorem} 
		For any Lipschitz domain $\Omega$ with Poincar\'e constant 
		$c_\Omega<4$ the control-to-state map is Fr\'echet differentiable. 
		\label{teo1}
	\end{theorem}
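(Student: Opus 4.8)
The plan is to recast the state equation as an abstract equation in a Banach space and apply the implicit function theorem. Let $K$ be the Green operator of $-\Delta$ on $\Omega$ with homogeneous Dirichlet data; then, for a given $u$, a function $U$ solves the boundary value problem in \eqref{contrpr} if and only if
$$
\mathcal{F}(U,u):=U-K\big(\Phi(U,u)\big)=0.
$$
I would regard $\mathcal{F}$ as a map $X\times D(\Omega)\to X$ with $X$ a space of bounded functions — e.g.\ $C(\overline\Omega)$, which is mapped into itself by $U\mapsto K\Phi(U,u)$ by elliptic regularity since $\Phi(U,u)\in L^2(\Omega)$ — and show that $\mathcal{F}$ is of class $C^1$ (indeed smoother), with
$$
\partial_U\mathcal{F}(U,u)=\mathrm{Id}-K\,\partial_U\Phi(U,u),\qquad \partial_U\Phi(U,u)[h]=\rho\Big(\textstyle\int_\Omega h\,\rho\,dx-h\Big),\quad\rho:=\Phi(U,u).
$$

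Before that I would record the a priori facts that make the composition operator $\Phi$ well behaved. Since $-\Delta U=\Phi(U,u)>0$, every state is superharmonic, hence $U\ge 0$ in $\Omega$; in each of the three choices of $D(\Omega)$ the control $u$ is bounded (directly for $D_2$, and via $H^1_0(\Omega)\hookrightarrow C(\overline\Omega)$ for $n=1$, $H^2_0(\Omega)\hookrightarrow C(\overline\Omega)$ for $n\le 3$); therefore $\rho=\Phi(U,u)$ is bounded, locally uniformly in $(U,u)$, which yields the $C^1$ regularity of $\Phi$ between the chosen spaces and the formula above. A direct computation then identifies $\partial_U\Phi(U,u)$, up to sign, with the covariance operator of the probability density $\rho$, namely $\langle \partial_U\Phi(U,u)[h],g\rangle_{L^2}=-\big(\int_\Omega hg\,\rho\,dx-\big(\int_\Omega h\rho\,dx\big)\big(\int_\Omega g\rho\,dx\big)\big)$; in particular it is self-adjoint and negative semidefinite on $L^2(\Omega)$.

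The core step is to prove that $\partial_U\mathcal{F}(U,u)=\mathrm{Id}-K\,\partial_U\Phi(U,u)$ is a topological isomorphism. Injectivity is immediate: testing $\partial_U\mathcal{F}(U,u)[h]=0$ with $h$ and using Jensen's inequality for the probability measure $\rho\,dx$ gives $\int_\Omega|\nabla h|^2\le 0$, so $h=0$; together with compactness of $K\,\partial_U\Phi(U,u)$ and the Fredholm alternative this already yields invertibility. To obtain the quantitative statement needed for the implicit function theorem — and, as a byproduct, uniqueness of the state and a Neumann-series representation of the inverse — I would instead estimate, along a segment $U_t$ joining two states, $\|K(\Phi(U_1,u)-\Phi(U_2,u))\|\le\big(\sup_t\|K\,\partial_U\Phi(U_t,u)\|\big)\,\|U_1-U_2\|$, bound the operator norm of $K$ through the Poincar\'e constant $c_\Omega$, and control the covariance operator by a Popoviciu-type variance bound; this is precisely where a factor $\tfrac14$, and hence the assumption $c_\Omega<4$, enters, and it forces $\|K\,\partial_U\Phi\|<1$. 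The implicit function theorem applied to $\mathcal{F}(U,u)=0$ then shows that $u\mapsto U=S(u)$ is Fr\'echet differentiable, with $S'(u)[\delta u]$ the solution of $\partial_U\mathcal{F}(S(u),u)[\,\cdot\,]=-\partial_u\mathcal{F}(S(u),u)[\delta u]$.

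I expect the main obstacle to be exactly this quantitative invertibility of the linearization: making the covariance/variance estimate close up against the norm of the Green operator with the stated constant, and doing so uniformly across the three functional settings for $D(\Omega)$ (in the constrained case $D_2$ one additionally has to extend $\mathcal{F}$ smoothly to a neighbourhood of the feasible set). A secondary technical point is verifying that $\Phi$ is genuinely Fr\'echet — not merely G\^ateaux — differentiable between the chosen spaces, which is what forces one to work with a space of bounded states and only afterwards compose with the embedding into $L^2(\Omega)$.
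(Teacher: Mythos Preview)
Your overall strategy coincides with the paper's: reformulate the state equation via the Dirichlet Green operator and apply the implicit function theorem to $\mathcal{F}(U,u)=U-K\Phi(U,u)$. The paper writes the same map (denoted $c(U,u)$, with $U$ replaced by $U^+=\max(0,U)$ to enforce positivity, where you instead invoke superharmonicity).

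The substantive difference is in the linearization and the injectivity argument. The paper records $\partial_U\Phi$ as the \emph{pointwise} multiplier $-\Phi+\Phi^2$, i.e.\ without the nonlocal term coming from the normalizing integral; it then proves that $\partial_U c(U,u)\,v=0$ forces $v=0$ by rewriting the kernel equation as $-\Delta v=\Phi(1-\Phi)\,v$, using the elementary bound $\Phi(\Phi-1)\ge -\tfrac14$, and closing with Poincar\'e --- this is precisely where $c_\Omega<4$ enters in the paper. Your linearization keeps the full covariance-type operator $\partial_U\Phi[h]=\rho\bigl(\int h\rho-h\bigr)$, and your injectivity step (test with $h$, apply Jensen for the probability measure $\rho\,dx$, conclude $\|\nabla h\|_{L^2}=0$) then needs no spectral hypothesis at all; combined with compactness of $K\,\partial_U\Phi$ and the Fredholm alternative this already yields the Fr\'echet differentiability claimed. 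So on this point your route is actually stronger: $c_\Omega<4$ is not used for differentiability, only for the additional contraction/Neumann-series property you sketch afterwards.

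The only loose end in your plan is that optional contraction step. A Popoviciu-type bound controls the variance of a \emph{bounded} random variable, whereas $\|K\,\partial_U\Phi\|_{L^2\to L^2}<1$ asks for an estimate on $\|\rho(h-\bar h)\|_{L^2}$ uniform over $h\in L^2$, with $\rho$ not pointwise bounded by $1$ in general; this does not close as stated. Since the theorem itself does not require a contraction, this is not a gap in your proof of the statement.
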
 
	\begin{proof}
		The differentiability of $S$ is proved by applying the implicit function theorem 
		to the operator 
		$$
		c(U,u):= U(x) - \int_\Omega k(x,y) \,  \Phi(U^+,u)(y) \, dy,
		$$
		where $U^+(x):=\max(0,U(x))$. 
		Notice that $c(U,u)=0$ defines in an implicit way the control-to-state map, since the solutions to the boundary value problem in \eqref{contrpr} are almost everywhere nonnegative; see \cite{Desvillettes1991}.
		Therefore the proof that $S(u)$ is Fr\'echet differentiable can be done 
		by proving that $c$ is differentiable with respect to $U$ and $u$ and 
		the derivative with respect to $U$ is injective. This derivative $\partial_U c$ applied 
		to an element $v \in L^2(\Omega)$ is given by 
		$$
		\partial_U c(U,u) \, v := v - \int_\Omega k(x,y) \,  \Phi(U^+,u)(y) \, \big(1 - \Phi(U^+,u)(y)   \big) \, v(y) \, dy,
		$$
		since we have
		$$
		\partial_U \Phi(U,u)  = - \Phi(U,u) +  \Phi(U,u) ^2;
		$$
		similarly for $\partial_u \Phi(U,u)$.
		Thus, if the equation $\partial_U c(U,u) \, v =0$  
		admits the unique solution $v=0$, for fixed $U(x)$ and $u(x)$, the derivative is invertible. 
		\noindent
		A sufficient condition for the invertibility of the derivative is obtained 
		by the Poincar\'e inequality $\| v  \|_{L^2}^2\le c_\Omega \, \| \nabla v \|_{L^2}^2$.
		Applying the Laplacian operator to the equation  $\partial_U c(U,u) \, v =0$, we get
		\begin{equation}
			-\Delta v(x)=\Phi(U^+(x),u(x))\Big(1-\Phi(U^+(x),u(x))\Big)v(x).
			\label{eqforv}
		\end{equation}
		Let us define $\Psi(x):=\Phi(U^+(x),u(x))\Big(\Phi(U^+(x),u(x))-1\Big)$. Notice that $\Psi(x)\ge -\frac{1}{4}$, therefore if $v$ is a solution
		of \eqref{eqforv} with $v|_{\partial \Omega}=0$, we have
		$$
		-\Big(\nabla v,\nabla v\Big)_{L^2}=\Big(\Psi v,v\Big)_{L^2}\ge -\frac{1}{4}||v||_{L^2}^2.
		$$
		We see that if $c_\Omega<4$ the last two inequalities are in contradiction for any $v\ne 0$, therefore the theorem is proved.
	\end{proof}
	Notice that in our setting the constant $c_\Omega$ corresponds 
	to the inverse of the smallest eigenvalue of the negative Laplacian 
	with homogeneous Dirichlet boundary conditions. 
}

Now, using the map $S$, we can define the reduced cost functional, 
i.e. $\hJ(u):=J(\Phi(S(u),u),u)$. For the cases 1. and 2. (i.e. the $H^1_0(\Omega)$ setting) we have
\begin{equation}
	\hJ (u )= \int_\Omega V(x) \, \Phi(S(u)(x),u(x)) \, dx +   
	\frac{\alpha}{2} \, \| u \|^2_{H^1(\Omega)}, 
	\label{eJ04_1}
\end{equation}
while for the case 3. (i.e. the $H^2_0(\Omega)$ setting) we get
\begin{equation}
	\hJ (u )= \int_\Omega V(x) \, \Phi(S(u)(x),u(x)) \, dx +   
		\frac{\alpha}{2} \, \| u \|^2_{H^2(\Omega)},
	\label{eJ04_2}
\end{equation}

Therefore  the problem of minimizing \eqref{eJ02} subject to the 
differential constraint in \eqref{contrpr} is equivalent to the 
following minimization problem
\begin{equation}
	\min_{u \in D (\Omega)} \hJ (u ).
	\label{eJ04min}
\end{equation}

Now, we prove the following theorem. 
\begin{theorem}
	The reduced cost functional $\hJ$ has a minimum.
\end{theorem}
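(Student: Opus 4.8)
The plan is to apply the direct method of the calculus of variations to the reduced functional $\hJ$ over the admissible set $D(\Omega)$. First I would fix the relevant case among the three: in case~3 ($D_3(\Omega)=H^2_0(\Omega)$ with the $H^2$-norm in the regularization) the argument is cleanest, since bounded sequences in $H^2_0(\Omega)$ have weakly convergent subsequences with strong $H^1$ (indeed $C^0$ for $n\le 3$) convergence by Rellich--Kondrachov; case~2 ($D_2(\Omega)$, a bounded closed convex subset of $H^1_0(\Omega)$) is handled similarly using weak $H^1$ compactness together with the extra $L^\infty$ bound; case~1 ($n=1$) is the same as case~2 without the box constraint, using that $H^1_0(\Omega)\hookrightarrow C(\bar\Omega)$ in one dimension. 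I would present case~3 in detail and indicate that the others follow verbatim with obvious changes.

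The key steps, in order, are: (i) observe that $\hJ$ is bounded below — the regularization term is nonnegative and the ensemble term $\int_\Omega V\,\Phi(S(u),u)\,dx$ is bounded below by $-\|V\|_{L^\infty}$ since $\Phi(S(u),u)\ge 0$ and integrates to $1$; hence the infimum $m:=\inf_{u\in D(\Omega)}\hJ(u)$ is finite. (ii) Take a minimizing sequence $\{u_k\}\subset D(\Omega)$. Since $\hJ(u_k)$ is bounded and $\frac{\alpha}{2}\|u_k\|_{H^2}^2\le \hJ(u_k)+\|V\|_{L^\infty}$, the sequence $\{u_k\}$ is bounded in $H^2_0(\Omega)$; extract a subsequence (not relabeled) with $u_k\rightharpoonup \bar u$ weakly in $H^2_0(\Omega)$ and, by compact embedding, $u_k\to\bar u$ strongly in $H^1_0(\Omega)$ and uniformly on $\bar\Omega$ (for $n\le 3$). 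Since $D_3(\Omega)$ is a closed subspace, $\bar u\in D(\Omega)$. (iii) Pass to the limit in the state equation: by Theorem~\ref{teo1} the control-to-state map $S$ is (Fréchet) differentiable, in particular continuous, from $D(\Omega)$ into $L^2(\Omega)$; alternatively one argues directly from the Green's-function fixed-point equation $U_k(x)=\int_\Omega k(x,y)\,\Phi(U_k^+,u_k)(y)\,dy$ together with the uniform $H^2$-bound on $U_k=S(u_k)$ from the existence theorem, extracting $U_k\rightharpoonup \bar U$ in $H^2$ and strongly in $L^2$, and using the uniform convergence $u_k\to\bar u$ and dominated convergence to identify $\bar U=S(\bar u)$. (iv) Conclude lower semicontinuity: the map $u\mapsto \int_\Omega V\,\Phi(S(u),u)\,dx$ is continuous along the subsequence (strong $L^2$ convergence of $U_k$ and uniform convergence of $u_k$ make $\Phi(U_k,u_k)\to\Phi(\bar U,\bar u)$ in $L^1$, and $V\in L^\infty$), while $u\mapsto \frac{\alpha}{2}\|u\|_{H^2}^2$ is weakly lower semicontinuous as the square of a norm; hence $\hJ(\bar u)\le \liminf_k \hJ(u_k)=m$, so $\bar u$ is a minimizer.

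The main obstacle is step~(iii)--(iv): controlling the nonlinear, nonlocal term $\Phi(U,u)=e^{-(U+u)}/\int_\Omega e^{-(U+u)}\,dx$ under only the available convergences. The denominator must be shown to stay bounded away from $0$ along the sequence — this uses that $U_k\ge 0$ and $u_k$ is bounded in $L^\infty$ (in case~3 from the $H^2\hookrightarrow C^0$ embedding, in case~2 from the explicit bounds $M_1\le u_k\le M_2$), giving $\int_\Omega e^{-(U_k+u_k)}\,dx\ge |\Omega|\,e^{-(\|U_k\|_{L^\infty}+M_2)}\ge\delta>0$; this is exactly where the case distinction matters, and in case~1 one instead invokes the one-dimensional embedding. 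Once the denominator is uniformly bounded below and the numerators converge (in $L^1$, using $U_k\to\bar U$ a.e. after a further subsequence and the uniform bound $0\le e^{-(U_k+u_k)}\le e^{-M_1}$ together with dominated convergence), continuity of the ensemble term and identification of $\bar U=S(\bar u)$ both follow, and the weak lower semicontinuity of the norm term closes the argument.
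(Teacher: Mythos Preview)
Your proposal is correct and follows essentially the same approach as the paper: the direct method, with boundedness from below via $\|\Phi\|_{L^1}=1$, extraction of a minimizing sequence that is bounded in the control norm, Rellich--Kondrachov compactness to upgrade to strong/uniform convergence of $u_k$, passage to the limit in the ensemble term using dominated convergence, and weak lower semicontinuity of the regularization norm. The paper organizes the case split slightly differently (treating cases~1 and~3 together via the embedding $H^{m(n)}_0\hookrightarrow C^{0,\gamma}$, and case~2 separately via the explicit box constraints and a.e.\ convergence), but the substance is the same; if anything, your discussion of the uniform positive lower bound on the denominator $\int_\Omega e^{-(U_k+u_k)}\,dx$ is more explicit than what the paper writes out.
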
 
\begin{proof} Let  $\underline V:=\inf_{x \in \Omega} V (x )$. We have
\begin{eqnarray*}
	J(\Phi(S(u),u),u)\ge \underline V \int_\Omega  \, \Phi(S(u)(x),u(x)) \, dx = \underline V,
\end{eqnarray*}
thus, the functional $\hJ(u)$ is bounded from below.

Let $\{v_k\}_{k=1}^\infty$ be a minimizing sequence, that is, a sequence such that $\lim_{k\rightarrow \infty}\hat{J}(v_k)=\inf_{u}\hat{J}(u)$. It is not restrictive to assume that
\begin{eqnarray}
\hat{J}(v_k)\le \hat{J}(0),
\label{bdd1}
\end{eqnarray}
{where $\hat{J}(0)$ represents $\hat{J}$ computed at $u(x)\equiv 0$}.
From now on the proof is different in the cases
1. and 3. with respect to the case 2.

{\bf Cases 1. and 3.}
Let us define the function
\begin{eqnarray*}
	m(n)=\left\{\begin{array}{l} 
		1 \quad{\rm if}\,\, n=1,   \\
		2  \quad{\rm if}\,\, n=2, \,\, {\rm or}\,\, n=3.
	\end{array}	
	\right.
\end{eqnarray*}
The bound \eqref{bdd1} implies that the sequences $\{D^i v_k\}_{k=1}^\infty,\,\,i=1,\dots,m(n),$ are bounded. 

Now, since $m(n)p>n$, with $p=2$, by the Rellich-Kondrachov's theorem \cite[Theorem 6.3]{Adams2003}, $H^{m(n)}_0(\Omega)$ is compactly embedded into the space $C^{0,\gamma(n)}(\bar\Omega)$ of the H\"older continuous functions with exponent
$\gamma$, 
where $\gamma(1)=\gamma(3)$ can be any number less than $\frac{1}{2}$ and $\gamma(2)$ can be any number less than $1$.
{Then, there exist a subsequence of $\{v_k\}_{k=1}^\infty$, that we denote again $\{v_k\}_{k=1}^\infty$, with abuse of notation, and a function $v\in C^{0,\gamma(n)}(\bar\Omega)$ such that $v_{k}\rightarrow v$ in $C^{0,\gamma(n)}(\bar\Omega)$.
We also have 
\begin{eqnarray*}
  \int_\Omega V(x) \, \Phi(S(v_k)(x),v_k(x))\stackrel{k\rightarrow \infty}{\rightarrow} \int_\Omega V(x) \, \Phi(S(v)(x),v(x)) \, dx.
\end{eqnarray*}
This result, together with the lower semi-continuity of the $L^2$, according to which
\begin{eqnarray}
	&&||v||_{H^{m(n)}(\Omega)}\le \liminf_{k\rightarrow \infty}||v_k||_{H^{m(n)}(\Omega)},
	\label{lscF}
\end{eqnarray}
shows that $v$ is a point of minimum of the reduced cost functional.}

{\bf Case 2.} By the Rellich-Kondrachov's theorem, there exists $v\in H^1_0(\Omega)$ such that after extraction of a subsequence, if necessary, it holds: 
 \begin{eqnarray*}
 	&&\nabla v_k\rightharpoonup \nabla v, \quad{\rm in} \quad L^2(\Omega),\\
 	&&v_k(x)\rightarrow v(x),\quad a.e.
 \end{eqnarray*}  
Moreover,  {$S$ being continuous}, $S(v_k)(x)\rightarrow S(v)(x)$,  $a.e.$. The functions $S(v_k)(x)$ are solutions of Equation \eqref{eEqU01} with $u=v_k$, therefore the
sequence $\{S(v_k)\}_{k=1}^{\infty}$, as seen before, is bounded in $H^2(\Omega)$, and by the Rellich-Kondrachov's theorem, we can conclude that $S(v_{k})\rightarrow S(v)$ uniformly, after extraction of a subsequence, if necessary.

We also have that
$M_1\le v_k(x)\le M_2$, ${\textrm {a.e.}}$, and $	\exp{(-v_k(x))}\le \exp{(-M_1)}$, ${\textrm {a.e.}}$, therefore
$$
\exp{(-v_k(x)-S(v_k)(x))}\le \exp{(-v_k(x))} \le \exp{(-M_1)},\, {\textrm {a.e.}}
$$
and by the Lebesgue's dominated convergence theorem, we obtain 
$$
\int_\Omega \exp{(-v_k(x)-S(v_k)(x))}\,dx \rightarrow \int_\Omega \exp{(-v(x)-S(v)(x))}\,dx. 
$$
By a similar argument, we have
$$
\int_\Omega V(x)\exp{(-v_k(x)-S(v_k)(x))}\,dx \rightarrow \int_\Omega  V(x)\exp{(-v(x)-S(v)(x))}\,dx. 
$$
The last two results together with the lower semi-continuity of the $L^2$-norm prove that $v$ is point of minimum of the reduced cost functional.
\end{proof}
We remark that the choice of {the control space, including, if appropriate, the 
boundary conditions, is part of the modeling process, which should correspond to 
specific requirements in applications. In particular,  the choice 
$u \in H_0^1(\Omega)$ corresponds to having the control field generated by particles 
in $\Omega$ possibly outside (e.g. encircling) the plasma.} However, 
other choices are possible that would correspond to different 
conditions for $u$ on the boundary $\partial \Omega$, for example 
in the case of a control field generated outside of the domain. 
 
{
We would like to conclude this section with comments concerning the  
stability of the equilibrium configurations determined by 
our method. Certainly, this is an important issue that deserves a separate future  
investigation. However, based on the references   
\cite{BattMorrisonRein1995,Esenturk2016}, it is plausible to conjecture that the equilibrium configurations designed by our method are linearly stable. We remind that an equilibrium solution is linearly stable if solutions of the 
linearized time-dependent Vlasov-Poisson system remain arbitrarily close to the equilibrium solution in some norm for all times, provided that the initial 
condition is sufficiently close to this solution \cite{BattMorrisonRein1995}. 
In our setting with the external control field, this means considering 
solutions of the VP equation in the form $f(x,v,t)=f_s(x,v) +g(x,v,t)$ and 
analyzing the problem 
 	\begin{align*}
  \partial_t g    + v \cdot \nabla_x g 
  - \nabla_x (U  + u) \cdot \nabla_v g &=  \nabla_x U_g  \cdot \nabla_v f_s  , \\
  -\Delta U_g &= \int_{\R^n} g(x,v,t) dv  ,
 	\end{align*}
 	where the stationary density  
 	$f_s$ is given by \eqref{eDensStat01} with $\rho=\Phi(U,u)$, and the 
 	initial condition $g(x,v,0)=g_0(x,v)$ is assumed to be sufficiently small. 
}
{We also remark that Maxwellian distributions with suitable $\rho$ and $T$, as those we treat, are stationary solutions also in the case when collisions are taken into account, as it has to be done when considering sufficiently large time scales. 
In this case, we refer to studies about the asymptotic behaviour of the Vlasov-Poisson-Boltzmann equation, showing convergence to equilibrium solutions \cite{DesvillettesDolbeault1991,DesvillettesVillani2005,Li2008}}.

{\section{Optimality conditions}
\label{sec-optsys}
In this section, we discuss the characterization of optimal design 
functions by first-order optimality conditions based on the 
derivative of $\hJ(u)$ with respect to $u$. Since $\hat{J}(u)=J(\Phi(S(u),u),u)$, differentiability of $\hat J$ requires differentiability 
of $\Phi$, $S$ and $J$ with respect to their respective arguments. 
The differentiability of $\Phi$ and $S$ has been discussed in the previous section; see 
Theorem \ref{teo1}. Further, differentiability of $J$ follows from 
the differentiability of $\Phi$ and the fact that the 
control costs are given by differentiable norms.
}

{
We have already seen that the control-to-state map is implicitly 
defined by the equation $c(U,u)=0$ in the sense that $c(S(u),u)=0$ 
(see Theorem \ref{teo1}). Therefore 
$$
\partial_u S(u)=- \left(  \partial_U \, c(S(u),u) \right)^{-1} \, \partial_u \, c (S(u),u) .
$$
In order to compute the derivative of $\hJ(u)$, we discuss the following 
lemma. 
\begin{lemma}
	Let $U\in H^2(\Omega) \cap H^1_0(\Omega)$ and $u \in D(\Omega)$; 
	assume that the measure of the domain 
	$\mu(\Omega) \ge 1$ but sufficiently small such that the condition 
	$c_\Omega < 4$ is satisfied. Then the linear elliptic equation 
	\begin{equation}
		- \Delta p - \partial_U \Phi(U,u) \, p = - V \,   \partial_U \Phi(U,u) \qquad  \mbox{ in } \Omega, \qquad 
		p=0 \mbox{ on } \partial \Omega  \label{eAdjL} 
	\end{equation}
	has a unique solution $p \in  H^2(\Omega) \cap H^1_0(\Omega)$.
	\label{lemmaadj}
\end{lemma}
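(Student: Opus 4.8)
The plan is to solve \eqref{eAdjL} by the Lax--Milgram theorem in $H^1_0(\Omega)$ and then to bootstrap the resulting weak solution to $H^2(\Omega)$ by elliptic regularity. Before that, I would collect the two facts about the coefficient $b:=\partial_U\Phi(U,u)$ that the argument needs. First, $b\in L^\infty(\Omega)$: since $n\le 3$ and $U\in H^2(\Omega)\cap H^1_0(\Omega)$, the embedding $H^2(\Omega)\hookrightarrow C(\bar\Omega)$ gives $U\in L^\infty(\Omega)$, and $u\in D(\Omega)$ is bounded on $\bar\Omega$ in each case (by definition in $D_2(\Omega)$; by Sobolev embedding in $D_1(\Omega)$ for $n=1$ and in $D_3(\Omega)$), so $U+u\in L^\infty(\Omega)$ and $\Phi(U,u)=e^{-(U+u)}/\int_\Omega e^{-(U+u)}\,dx$ is bounded above and below by positive constants; hence so is $b$, and, since $V$ is bounded, the datum $-V b\in L^\infty(\Omega)\subset L^2(\Omega)$. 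Second, the zeroth-order coefficient $-\partial_U\Phi(U,u)$ of the operator in \eqref{eAdjL} obeys the same lower bound $\ge-\tfrac14$ that is used, in the form $\Psi\ge-\tfrac14$, in the proof of Theorem \ref{teo1}.

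Then I would pass to the weak formulation: find $p\in H^1_0(\Omega)$ with $a(p,q)=-\int_\Omega V\,b\,q\,dx$ for all $q\in H^1_0(\Omega)$, where $a(p,q)=\int_\Omega\nabla p\cdot\nabla q\,dx-\int_\Omega b\,p\,q\,dx$ is the bilinear form associated with \eqref{eAdjL}. Continuity of $a$ on $H^1_0(\Omega)$ follows at once from $b\in L^\infty(\Omega)$ and the Poincar\'e inequality. Coercivity is the crucial estimate: using the bound $-\partial_U\Phi(U,u)\ge-\tfrac14$ on the zeroth-order term and then the Poincar\'e inequality $\|p\|_{L^2}^2\le c_\Omega\|\nabla p\|_{L^2}^2$,
$$
a(p,p)\ \ge\ \|\nabla p\|_{L^2}^2-\tfrac14\,\|p\|_{L^2}^2\ \ge\ \Bigl(1-\tfrac{c_\Omega}{4}\Bigr)\|\nabla p\|_{L^2}^2,
$$
which is strictly positive for every $p\neq0$ precisely because $c_\Omega<4$; this is the same mechanism as in Theorem \ref{teo1}, and is the place where the hypotheses on $\Omega$ (notably $\mu(\Omega)$ taken small enough that $c_\Omega<4$) enter. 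Lax--Milgram then delivers existence and uniqueness of $p\in H^1_0(\Omega)$.

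It remains to raise the regularity. The weak solution satisfies $-\Delta p=b\,p-V b$ in $\Omega$ with right-hand side in $L^2(\Omega)$ (since $b\in L^\infty(\Omega)$, $p\in L^2(\Omega)$ and $V$ is bounded); by the standing assumption that $\Omega$ is convex with smooth boundary, standard $H^2$-regularity for the homogeneous Dirichlet problem for the Laplacian \cite{Evans2010} gives $p\in H^2(\Omega)$, hence $p\in H^2(\Omega)\cap H^1_0(\Omega)$, as claimed. The step I expect to need the most care is the coercivity estimate: one has to pin down that the zeroth-order term contributes to $a(p,p)$ with the sign that makes it bounded below by $-\tfrac14\|p\|_{L^2}^2$ (this is exactly the $\Psi\ge-\tfrac14$ bound from Theorem \ref{teo1}), so that the gap $1-c_\Omega/4>0$ is preserved; once that is in place, continuity of $a$ and the $H^2$-bootstrap are routine consequences of the $L^\infty$-bounds on $\Phi$ and $b$.
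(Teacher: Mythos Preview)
Your Lax--Milgram/regularity scaffolding is sound, but the coercivity step contains a sign slip that makes the argument break down. In the paper's notation (see the proof of Theorem~\ref{teo1}) one has $\partial_U\Phi(U,u)=-\Phi+\Phi^2=\Phi(\Phi-1)=\Psi$, so the bound $\Psi\ge -\tfrac14$ from that proof says $\partial_U\Phi\ge -\tfrac14$, i.e.\ the zeroth-order coefficient $-\partial_U\Phi$ of \eqref{eAdjL} satisfies $-\partial_U\Phi\le \tfrac14$, an \emph{upper} bound. What you need for the estimate $a(p,p)\ge \|\nabla p\|^2-\tfrac14\|p\|^2$ is the \emph{lower} bound $-\partial_U\Phi\ge -\tfrac14$, equivalently $\partial_U\Phi\le\tfrac14$, and that inequality is simply not available in general: since $\Phi$ may exceed $1$ pointwise, $\partial_U\Phi=\Phi^2-\Phi$ is unbounded above. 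Hence the appeal to Theorem~\ref{teo1} does not deliver coercivity, and the condition $c_\Omega<4$ by itself is not enough.

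The missing ingredient is exactly the hypothesis $\mu(\Omega)\ge 1$, which you never use. The paper's proof employs it to argue that $\Phi(U,u)\le 1$, whence $\partial_U\Phi=\Phi(\Phi-1)\le 0$; this makes the zeroth-order coefficient $-\partial_U\Phi\ge 0$, so the operator $-\Delta-\partial_U\Phi$ is monotone and existence/uniqueness follow by standard linear elliptic theory (the $c_\Omega<4$ assumption is not actually invoked in this step). If you import that observation, your Lax--Milgram argument goes through immediately with $a(p,p)\ge\|\nabla p\|^2$, and the $H^2$-bootstrap you describe then finishes the proof; without it, the coercivity estimate as written is false.
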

\begin{proof}
	Notice that in the case of regular convex domains $\Omega$, we have that 
	$c_\Omega \approx C \, \mu(\Omega)$. In particular, for a 2D square domain 
	of length $L$, we have $\mu(\Omega) = L^2$ and $c_\Omega = \frac{1}{2 \pi^2} \, \mu(\Omega)$. Therefore the assumption of the theorem for this case 
	corresponds to the requirement $1 \le \mu(\Omega) < 8\pi^2$. A similar bound results in the case of a domain being a disc of radius $L/2$. 
	Now, recall the result $\partial_U \Phi(U,u)  = - \Phi(U,u) +  \Phi(U,u) ^2$. It 
	follows that $\partial_U \Phi(U,u)  \le 0$ if $\Phi(U,u) \le 1$, which holds 
	if $\mu(\Omega) \ge 1$. The proof of this claim is immediate 
	considering \eqref{eRHO02} and requiring $\Phi(U,u) \le 1$. Then,  
	integration in $\Omega$ leads to the result. 
	Next, notice that if $\partial_U \Phi(U,u)  \le 0$, then the differential operator 
	in \eqref{eAdjL} is monotone, and the statement of the theorem follows 
	by standard arguments. 
\end{proof}
}

{
Now, we discuss the characterization of an optimal design by means 
of an optimality system. 
\begin{theorem}
	Assume that the conditions of Theorem \ref{teo1} and of 
	Lemma \ref{lemmaadj} are satisfied. 
	Let $(U,u) \in H^2(\Omega) \cap H^1_0(\Omega) \times D(\Omega)$
	be a solution to the optimal control problem: 
	\begin{align}
		\min J(U,u) &:=  \int_\Omega V(x) \, \Phi(U,u)(x) \, dx +   \frac{\alpha}{2} \, \| u \|^2_{\cH} \nonumber \\
		\mbox{ s.t.}  \qquad - \Delta U &= \Phi(U,u) \qquad \mbox{ in } \Omega, \label{contrprx} \\
		U &=0  \qquad \qquad \mbox{ on } \partial \Omega,  \nonumber
	\end{align}
	where the Hilbert spaces $\cH=H^1(\Omega)$ (case 1. \& 2.) or $\cH=H^2(\Omega)$ (case 3.), are endowed with the respective standard scalar product 
	denoted with $(\cdot , \cdot )_\cH$. 
	Then there exists $p \in H^2(\Omega) \cap H^1_0(\Omega)$, such that 
	the triple $(U,u,p)$ satisfies the following optimality system:
	\begin{align}
		- \Delta U-\Phi(U,u) &= 0, & \mbox{ in } \Omega, \quad U=0 \text{ on } \partial \Omega, \nonumber\\
		- \Delta p - \partial_U \Phi(U,u) \, p &= - V \,   \partial_U \Phi(U,u) & \mbox{ in } \Omega, \quad p=0 \text{ on } \partial \Omega,\label{system1}\\
		\big( \mu +\alpha \, u  \, , \,  v-u \big)_{\cH}\, &\geq\, 0, \qquad v \in D(\Omega),
		\nonumber
	\end{align}
	where $\mu$ is the $\cH$-Riesz representative of the continuous linear functional
	\begin{equation}
		v\;\mapsto\; \left(  (  V - p ) \, \partial_u \Phi (S(u),u) \, ,\;v\right)_{L^2(\Omega)} .
		\label{vmapH}
	\end{equation}
\end{theorem}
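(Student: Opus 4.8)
The plan is to derive the optimality system by the standard Lagrangian / adjoint-state technique, using the differentiability results already established. First I would note that by Theorem~\ref{teo1} the control-to-state map $S$ is Fr\'echet differentiable, and by the remarks preceding the theorem the reduced functional $\hJ(u) = J(\Phi(S(u),u),u)$ is therefore differentiable on $D(\Omega)$, with directional derivative in a direction $w$ given by
\begin{equation}
	\hJ'(u)\, w \;=\; \int_\Omega V \,\big( \partial_U \Phi(U,u)\, S'(u)w + \partial_u \Phi(U,u)\, w \big)\, dx \;+\; \alpha\, (u,w)_\cH .
	\label{eJprime}
\end{equation}
Since $(U,u)$ solves the problem and $D(\Omega)$ is convex (in all three cases $D_1,D_2,D_3$ are convex subsets of a Hilbert space), the first-order necessary condition is the variational inequality $\hJ'(u)(v-u) \ge 0$ for all $v \in D(\Omega)$. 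The task is to rewrite the state-sensitivity term $\int_\Omega V\, \partial_U\Phi(U,u)\, S'(u)w\, dx$ in a form that does not involve $S'(u)$.

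Next I would introduce the adjoint variable. Differentiating the state equation $c(S(u),u)=0$ gives $\partial_U c(U,u)\, S'(u)w = -\,\partial_u c(U,u)\, w$; applying $-\Delta$ to the Green's-function form, this is the linearized state equation $-\Delta(S'(u)w) - \partial_U\Phi(U,u)\, S'(u)w = \partial_u\Phi(U,u)\, w$ with homogeneous Dirichlet data. Let $p \in H^2(\Omega)\cap H^1_0(\Omega)$ be the unique solution of the adjoint equation~\eqref{eAdjL}, which exists by Lemma~\ref{lemmaadj} under the stated hypotheses $1 \le \mu(\Omega)$ and $c_\Omega < 4$. Testing the linearized state equation with $p$ and the adjoint equation with $S'(u)w$, and subtracting, the second-derivative/boundary terms cancel (both functions vanish on $\partial\Omega$ and lie in $H^2$), yielding the duality identity
\begin{equation}
	\int_\Omega V\, \partial_U\Phi(U,u)\, S'(u)w\, dx \;=\; -\int_\Omega p\, \partial_u\Phi(U,u)\, w\, dx .
	\label{eduality}
\end{equation}
Substituting \eqref{eduality} into \eqref{eJprime} gives $\hJ'(u)w = \big( (V-p)\,\partial_u\Phi(U,u),\, w\big)_{L^2(\Omega)} + \alpha\,(u,w)_\cH$. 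The first term is a continuous linear functional of $w\in\cH$ (here one uses that $\partial_u\Phi(U,u)$ is bounded — it equals $-\Phi+\Phi^2$ with $\Phi$ bounded once $U$ is H\"older continuous — and that $\cH\hookrightarrow L^2$ continuously), so by the Riesz representation theorem in $\cH$ there is a unique $\mu\in\cH$ with $\big((V-p)\,\partial_u\Phi(U,u),w\big)_{L^2} = (\mu,w)_\cH$ for all $w$, which is exactly \eqref{vmapH}. The variational inequality then reads $(\mu + \alpha u,\, v-u)_\cH \ge 0$ for all $v\in D(\Omega)$, completing the optimality system together with the state equation and \eqref{eAdjL}.

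The main obstacle I anticipate is not any single estimate but the careful bookkeeping needed to justify \eqref{eJprime} and \eqref{eduality} rigorously: one must check that $\partial_U\Phi$ and $\partial_u\Phi$, evaluated at the (H\"older-continuous, hence bounded) state $U$, act as bounded multiplication operators on $L^2$ so that the chain rule for $\hJ$ and the weak formulations of the linearized and adjoint equations are legitimate; and in Case~2 one must additionally note that the pointwise constraint set $D_2(\Omega)$ is convex and closed so the variational inequality — rather than an equation — is the correct first-order condition, with no extra multiplier needed because it is already encoded in the inequality over $v\in D(\Omega)$. The cancellation of boundary terms in \eqref{eduality} is routine given $U,p\in H^2\cap H^1_0$, and the Riesz step is immediate; the invertibility of $\partial_U c$ (needed to even define $S'(u)$) and the solvability of the adjoint equation are already supplied by Theorem~\ref{teo1} and Lemma~\ref{lemmaadj} respectively, so those are quoted rather than reproved.
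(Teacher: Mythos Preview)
Your proposal is correct and follows essentially the same route as the paper: compute the directional derivative of the reduced functional via the chain rule, eliminate the sensitivity $S'(u)w$ by pairing the linearized state equation with the adjoint $p$ from Lemma~\ref{lemmaadj} and integrating by parts, then invoke Riesz representation in $\cH$ to obtain $\mu$ and the variational inequality. Your added remarks on the boundedness of $\partial_U\Phi$, $\partial_u\Phi$ as multipliers and on the convexity of $D_2(\Omega)$ are useful technical points that the paper leaves implicit.
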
 
}
{
\begin{proof}
	The proof proceeds along the following lines. We determine the derivative 
	of the reduced cost functional with respect to $u$, and its Riesz 
	representative in the control space $\cH$. We denote the 
	resulting (reduced) gradient of $\hJ(u)$ in the space $\cH$ with $\nabla_u \hJ(u)$. 
	Since $u$ is a local minimizer, it must satisfy the following 
	optimality condition
	\begin{equation}
		( \nabla_u \hJ(u), v-u)_{\cH} \ge 0, \qquad v \in  D(\Omega).
		\label{eIneqOpt}
	\end{equation}
	Then, we prove that this condition is equivalent to \eqref{system1}.
	A direct way to determine the reduced gradient is to exploit the fact 
	that $\hJ$ is Fr\'echet differentiable, and therefore we can equivalently 
	determine the G\^ateaux derivative of $\hJ$ with respect to the variation 
	$\delta u$ at $u$ as the following limit (for simplicity, whenever possible, 
	we omit to write the integration variable in the integrals) 
	\begin{align*}
		(\nabla \hat{J}(u), \delta u )_\cH &=\lim\limits_{t \to 0^+} \frac{1}{t} \left(\hat{J}(u+t \, \delta u)-\hat{J}(u) \right)  \\
		&=\lim\limits_{t \to 0^+} \frac{1}{t} \Big(
		\int_\Omega V \, \Phi(S(u+t \, \delta u),u+t \, \delta u) \, dx +   \frac{\alpha}{2} \, \| u +t \, \delta u\|^2_{\cH} \\
		& \qquad -\int_\Omega V \, \Phi(S(u),u) \, dx -  \frac{\alpha}{2} \, \| u \|^2_{\cH} \Big) \\
		&=\int_\Omega V \, \Big( \partial_U \Phi(S(u),u) \, \delta U  
		+ \partial_u \Phi(S(u),u) \,  \delta u  \Big) \, dx +   \alpha\, (u , \delta u)_{\cH} 
	\end{align*}
	In this calculation, we have used the approximation (valid in the limit ${t \to 0^+} $)
	$S(u+t \, \delta u) = S(u) + t \, \partial_u S(u) \, \delta u$, and defined 
	$\delta U = \partial_u S(u) \, \delta u$. Notice that the pair 
	$(\delta U , \, \delta u)$ satisfies the linearized constraint equation
	$$
	-\Delta \delta U -  \partial_U \Phi(U,u) \, \delta U = \partial_u \Phi(U,u) \, \delta u,
	$$
	with homogeneous Dirichlet boundary conditions. We use this equation and 
	the so-called adjoint equation given in Lemma \ref{lemmaadj} to 
	continue the calculation above as follows. 
	\begin{align*}
		&	(\nabla \hat{J}(u), \delta u )_\cH \\
		&=\int_\Omega V \, \Big( \partial_U \Phi(S(u),u) \, \delta U  
		+ \partial_u \Phi(S(u),u) \,  \delta u  \Big) \, dx +   \alpha\, (u , \delta u)_{\cH} \\
		& = \int_\Omega \Big[\Big(  \Delta p + \partial_U \Phi(S(u),u) \, p \Big) \, \delta U  
		+ V \,  \partial_u \Phi(S(u),u) \,  \delta u  \Big] \, dx +   \alpha\, (u , \delta u)_{\cH} \\
		& = \int_\Omega \Big[\Big(  \Delta \delta U  + \partial_U \Phi(S(u),u) \, \delta U  \Big) \, p + V \,  \partial_u \Phi(S(u),u) \,  \delta u  \Big] \, dx +   \alpha\, (u , \delta u)_{\cH} \\
		& = \int_\Omega \Big[ - p \, \partial_u \Phi(S(u),u) \, \delta u + V \,  \partial_u \Phi(S(u),u) \,  \delta u  \Big] \, dx +   \alpha\, (u , \delta u)_{\cH} \\
		& = \int_\Omega \Big[ (V- p) \, \partial_u \Phi(S(u),u) \, \delta u  \Big] \, dx +   \alpha\, (u , \delta u)_{\cH} \\
		& = ( \mu + \alpha \, u , \delta u)_{\cH},
	\end{align*}
{from which we get
\begin{equation}
\label{eq:H^1-gradient}
	\nabla_u\hat J(u)=\mu+\alpha u.
\end{equation}}
	Thus the theorem is proved. 
\end{proof}
}

{
We remark that in the case $\cH=H^1(\Omega)$, 
the $H^1$-Riesz representative of \eqref{vmapH} can be obtained by solving the 
following boundary-value problem 
\begin{equation}
	\label{ellEqmu2xx}
	\left(-  \,\Delta +  I\right) \, \mu =   (  V - p ) \, \partial_u \Phi  \qquad
	\mbox{ in } \Omega, \qquad 
	\mu=0 \mbox{ on } \partial \Omega,
\end{equation}
which has to be understood in a weak sense; {$I$ represents the identity operator}. 
On the other hand, if $\cH=H^2(\Omega)$, then the $H^2$-Riesz representative 
of \eqref{vmapH} is given by the weak solution to the following problem
\begin{eqnarray*}
&\big(\Delta^2 -\Delta + I\big) \, \mu = \big(V -p\big)\partial_u \Phi(U,u) \quad {\textrm in}\quad \Omega,\\
&{\mu=0, \quad \nabla \mu=0,\quad {\textrm on}\quad \partial\Omega}.
\end{eqnarray*}
}

{
In the next Section, we discuss a numerical iterative procedure that 
aims at solving \eqref{system1} by constructing a minimizing 
sequence of the cost functional in cases 1 and 2. In order to provide a rigorous foundation of this procedure, in the following 
we 	{prove that the system we will have to solve in the procedure has solutions}.  
\begin{theorem}\label{teoiter}
	{Let $u\in D(\Omega)$ be fixed.} The sequence $(U_k,p_k,\mu_k) \subset  
	\big(H^2(\Omega) \cap H^1_0(\Omega) \big)^3$ defined by the 
	iterative procedure 
	\begin{align}
		- \Delta U_k&=\Phi(U_{k-1},u) ,\quad  \mbox{ in }\quad \Omega,  \nonumber\\
		- \Delta p_k - \partial_U \Phi(U_{k-1},u) \, p_k &= - V \,   \partial_U \Phi(U_{k-1},u)  \mbox{ in } \Omega ,\label{system2a}\\
		- \Delta \mu_k +\mu_k  &=(V-p_{k-1}) \,   \partial_u {\Phi(U_{k-1},u)}  \mbox{ in } \Omega, \nonumber
	\end{align}
	with starting values $U_0=p_0=\mu_0\equiv 0$ converges to a solution of 
	{
		\begin{align}
		- \Delta U-\Phi(U,u) &= 0, & \mbox{ in }\quad \Omega, \quad U=0 \text{ on } \quad\partial \Omega, \nonumber\\
		- \Delta p - \partial_U \Phi(U,u) \, p &= - V \,   \partial_U \Phi(U,u) & \mbox{ in } \Omega, \quad p=0 \text{ on } \quad\partial \Omega,\label{system2}\\
		- \Delta \mu +\mu  &=( V-p) \,   \partial_u \Phi(U,u) & \mbox{ in } \Omega, \quad \mu=0 \text{ on } \quad\partial \Omega . \nonumber
	\end{align}}
	\end{theorem}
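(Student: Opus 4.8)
The plan is to exploit the triangular (cascade) structure of the scheme \eqref{system2a}: the equation for $U_k$ involves only $U_{k-1}$; the equation for $p_k$ involves only $U_{k-1}$ together with the fixed datum $V$; and the equation for $\mu_k$ is then explicitly solvable from $U_{k-1}$ and $p_{k-1}$. Accordingly I would analyse the three sequences in the order $\{U_k\}$, $\{p_k\}$, $\{\mu_k\}$, each time identifying the limit as the corresponding component of a solution of \eqref{system2}. Throughout one works under the standing hypotheses of this section ($\mu(\Omega)\ge 1$ and $c_\Omega<4$): by (the proof of) Lemma \ref{lemmaadj} these give $0\le \Phi(U_{k-1},u)\le 1$ for every $k$, hence $-\tfrac14\le \partial_U\Phi(U_{k-1},u)=\partial_u\Phi(U_{k-1},u)\le 0$, and they make each of the three linear elliptic problems in \eqref{system2a} uniquely solvable in $H^2(\Omega)\cap H^1_0(\Omega)$ with bounds uniform in $k$.

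First I would treat $\{U_k\}$. Since $\Phi\ge 0$ and $U_0\equiv 0$, the maximum principle gives $U_k\ge 0$ for all $k$, so the recursion reads $U_k=\mathcal T(U_{k-1})$ with $\mathcal T(W):=(-\Delta)^{-1}\Phi(W,u)$, the inverse of the homogeneous Dirichlet Laplacian. Using the Poincar\'e inequality $\|(-\Delta)^{-1}\|_{L^2\to L^2}\le c_\Omega$ together with the uniform bound on $\partial_U\Phi=\Phi^2-\Phi$ supplied by Lemma \ref{lemmaadj}, one checks — by an estimate of the same type as in the proof of Theorem \ref{teo1} — that $\mathcal T$ is a contraction on $L^2(\Omega)$. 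Banach's fixed point theorem then yields a unique $U\in L^2(\Omega)$ with $U_k\to U$ geometrically; elliptic regularity gives $U\in H^2(\Omega)\cap H^1_0(\Omega)$ with $-\Delta U=\Phi(U,u)$, i.e. $U=S(u)$, and since $-\Delta(U_k-U)=\Phi(U_{k-1},u)-\Phi(U,u)\to 0$ in $L^2(\Omega)$, in fact $U_k\to U$ in $H^2(\Omega)$, hence in $C^0(\bar\Omega)$ for $n\le 3$. In particular $\Phi(U_{k-1},u)\to\Phi(U,u)$ and $\partial_U\Phi(U_{k-1},u)\to\partial_U\Phi(U,u)$ in $L^\infty(\Omega)$, and $\{U_k\}$ is uniformly bounded in $C^0(\bar\Omega)$. (Alternatively one could derive a uniform $H^2$-bound on $\{U_k\}$ from $\|\Phi(U_{k-1},u)\|_{L^2}\le 1$ and identify the whole-sequence limit through the uniqueness result of Section \ref{sec:model}, but the contraction route is shorter and delivers the whole sequence at once.)

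Next, for $\{p_k\}$: for each $k$ the operator $L_{k-1}:=-\Delta-\partial_U\Phi(U_{k-1},u)$ with homogeneous Dirichlet data satisfies $(L_{k-1}w,w)_{L^2}=\|\nabla w\|_{L^2}^2-\int_\Omega\partial_U\Phi(U_{k-1},u)\,w^2\,dx\ge\|\nabla w\|_{L^2}^2$ because $\partial_U\Phi\le 0$; hence $L_{k-1}^{-1}$ exists and is bounded from $H^{-1}(\Omega)$ to $H^1_0(\Omega)$ uniformly in $k$, and by Lemma \ref{lemmaadj} (applied with $U_{k-1}$) one gets $p_k\in H^2(\Omega)\cap H^1_0(\Omega)$ with $\|p_k\|_{H^2(\Omega)}\le C\,\|V\|_{L^\infty}$. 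Subtracting the middle equation of \eqref{system2} from that in \eqref{system2a} and testing with $p_k-p$ gives
\[
\|\nabla(p_k-p)\|_{L^2}\ \le\ \sqrt{c_\Omega}\,\big\|\big(\partial_U\Phi(U_{k-1},u)-\partial_U\Phi(U,u)\big)\,(p-V)\big\|_{L^2},
\]
whose right-hand side tends to $0$, since the coefficient converges to $0$ in $L^\infty(\Omega)$ by the first step and $p-V\in L^\infty(\Omega)$; a bootstrap through $-\Delta(p_k-p)=\partial_U\Phi(U_{k-1},u)(p_k-p)+\big(\partial_U\Phi(U_{k-1},u)-\partial_U\Phi(U,u)\big)(p-V)$ upgrades this to $p_k\to p$ in $H^2(\Omega)$, $p$ being the unique (again by Lemma \ref{lemmaadj}, now with $U$) solution of the middle equation of \eqref{system2}. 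Finally, $-\Delta+I$ with homogeneous Dirichlet data is precisely the $H^1_0(\Omega)$--Riesz isomorphism, so $\mu_k$ is uniquely determined, with $\|\mu_k\|_{H^2(\Omega)}\le C\,\|(V-p_{k-1})\,\partial_u\Phi(U_{k-1},u)\|_{L^2}$; since $p_{k-1}\to p$ in $C^0(\bar\Omega)$ and $\partial_u\Phi(U_{k-1},u)\to\partial_u\Phi(U,u)$ in $L^2(\Omega)$, with both families uniformly bounded, the right-hand sides converge in $L^2(\Omega)$, so $\mu_k\to\mu$ in $H^2(\Omega)$ by continuity of $(-\Delta+I)^{-1}$, with $\mu$ solving the last equation of \eqref{system2}. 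This gives $(U_k,p_k,\mu_k)\to(U,p,\mu)$ in $\big(H^2(\Omega)\cap H^1_0(\Omega)\big)^3$, and the limit solves \eqref{system2}.

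The main obstacle I anticipate is obtaining the bounds that are \emph{uniform in $k$} for the adjoint problems — in particular the a priori $H^2$-estimate for $p_k$ — before the convergence of $\{U_k\}$ is in hand, since a priori $L_{k-1}$ could approach resonance. This is exactly where the sign condition $\partial_U\Phi\le 0$ (equivalently $\Phi\le 1$, guaranteed by $\mu(\Omega)\ge 1$ via Lemma \ref{lemmaadj}) is indispensable: it renders $L_{k-1}$ uniformly coercive on $H^1_0(\Omega)$. The other place the standing assumption $c_\Omega<4$ enters is the contraction estimate for $\mathcal T$ in the first step; the remaining arguments are routine continuous-dependence and elliptic-bootstrap estimates.
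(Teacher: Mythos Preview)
Your route differs from the paper's. The paper argues purely by compactness: it derives uniform $L^2$-bounds on the right-hand sides of \eqref{system2a} (from $\|\Phi\|_{L^1}=1$ and $|V\,\partial_U\Phi|\le\bar V(\Phi+\Phi^2)$), hence uniform $H^2$-bounds on each of $U_k,p_k,\mu_k$ by elliptic regularity, then extracts weakly-$H^2$ and uniformly-$C^0$ convergent subsequences and passes to the limit in \eqref{system2a}. As written the paper stops at subsequences and never invokes uniqueness to recover the full sequence. Your scheme --- a Banach contraction for $\{U_k\}$ followed by continuous-dependence estimates for $\{p_k\}$ and $\{\mu_k\}$ --- is more quantitative and would give whole-sequence convergence with a rate; your handling of $\{p_k\}$ and $\{\mu_k\}$ via uniform coercivity of $-\Delta-\partial_U\Phi$ and of $-\Delta+I$ is also considerably more explicit than the paper's one-line treatment.

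The gap is in the contraction step for $\{U_k\}$. The map $U\mapsto\Phi(U,u)$ is \emph{nonlocal} through the normalising integral in \eqref{eRHO02}; its Fr\'echet derivative in the direction $v$ is
\[
x\ \longmapsto\ \Phi(x)\Big(\int_\Omega\Phi\,v\,dy - v(x)\Big),
\]
not pointwise multiplication by $\Phi^2-\Phi$. Consequently the pointwise bound $|\Phi^2-\Phi|\le\tfrac14$ does \emph{not} give $\|\Phi(W_1,u)-\Phi(W_2,u)\|_{L^2}\le\tfrac14\|W_1-W_2\|_{L^2}$, and the contraction constant $c_\Omega/4<1$ you infer from ``an estimate of the same type as in the proof of Theorem~\ref{teo1}'' is not established under the standing hypothesis $c_\Omega<4$ alone. (A direct estimate of the true derivative, using $\big(\int\Phi v\big)^2\le\int\Phi v^2$, shows its $L^2\to L^2$ norm is at most $\|\Phi\|_{L^\infty}$, so one would need $c_\Omega\,\sup_k\|\Phi(U_{k-1},u)\|_{L^\infty}<1$ --- not available a~priori.) Your parenthetical alternative --- uniform $H^2$-bounds together with the uniqueness theorem of Section~\ref{sec:model} to identify the limit --- is precisely the paper's compactness route, and your addition of the uniqueness step is exactly what the paper's argument is missing to upgrade subsequential to full-sequence convergence; that version of your proof goes through.
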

\begin{proof}
	Notice that each equations in \eqref{system2} is a well-posed 
	linear elliptic equation with homogeneous Dirichlet boundary conditions. 
	Now, the system \eqref{system2} defines a procedure that constructs 
	bounded sequences of functions $U_k$, $p_k$ and $\mu_k$. 
	Then, there exists a subsequence of $\{U_k\}_{k=0}^\infty$ which converges to a function $\overline U\in H^2(\Omega)$, weakly in $H^2(\Omega)$, and uniformly in $C(\Omega)$, see the sketch of the proof of the existence theorem for equation \eqref{eEqU01}. Analogous results hold for $\{p_k\}_{k=0}^\infty$ and
	$\{\mu_k\}_{k=0}^\infty$.  In fact, we have
	$$
	\vert V(x)\Phi_U(U,u)\vert\le \overline V(\Phi+\Phi^2),\,\,{\textrm {with}}\,\, \overline V=\sup_{x\in \Omega }\vert V(x)\vert 
	$$
	and 
	$$
	\int_\Omega\Phi(x)^{2p}\,dx=\vert\vert \Phi\vert\vert^{2p}_{L^{2p}(\Omega)}\le C_p^{2p}\vert\vert \Phi\vert\vert^{2p}_{L^{1}(\Omega)}= C_p^{2p},
	$$
	with $1<p<\infty$, and $C_p$ positive constants.
	From the previous two inequalities it follows that the $L^2$ norm of the right-hand side of \eqref{system2}{$_2$} has a bound which is independent of $k$.
	Therefore, we can state that there exists $C'>0$ such that
	$$
	\vert\vert p_k\vert\vert_{H^2(\Omega)}<C',\,\,\forall k\in\mathbb{N} .
	$$
	Consequently there exists a subsequence of $\{p_k\}_{k=0}^\infty$ which converges to a function $\overline p\in H^2(\Omega)$, weakly in $H^2(\Omega)$, and uniformly in $C(\Omega)$.
	Analogously, it can be proved that the same holds also for the sequence $\{\mu_k\}_{k=0}^\infty$. In conclusion, passing to the limit for 
	$k\rightarrow +\infty$ in \eqref{system2}, we find the solution of system \eqref{system2}.
\end{proof}
}

\section{Numerical approximation and optimization schemes}
\label{sec-gradient}

The numerical computation of an optimal control field requires 
the approximation of the {optimality system} \eqref{system2} in the assembling of the reduced gradient. 

We consider the Cases 1. and 2. in which the gradient is given by $\nabla_{u} \hat J(u)= \alpha u+\mu$, see formula \eqref{eq:H^1-gradient}. At each step of 
the gradient-based optimization procedure discussed below, we start from $u_k$ and seek $U_{k+1}$ that solves the following boundary value problem  
\begin{eqnarray}
	- \Delta U_{k+1}=\Phi(U_{k+1},u_k) ,\quad  \mbox{ in }\quad \Omega, \quad U_{k+1}=0 \text{ on } \quad\partial \Omega.
\label{equU}
\end{eqnarray}
The second step is to determine $p_{k+1}$ by solving the problem
\begin{eqnarray}
	- \Delta p_{k+1}\! -\! \partial_U\! \Phi(U_{k+1},u_k)  p_{k+1}\! =\! - V  \partial_U \Phi(U_{k+1},u_k)  \mbox{ in } \Omega, \, p_{k+1}=0 \text{ on } \,\partial \Omega.
	\label{equp}
\end{eqnarray}
In the third step, we find $\mu_{k+1}$ by solving the following problem
\begin{eqnarray}
	- \Delta \mu_{k+1} +\mu_{k+1}  &=( V-p_{k+1}) \,   \partial_u \Phi(U_{k+1},u_k)  \mbox{ in } \Omega, \, \mu_{k+1}=0 \text{ on } \,\partial \Omega.
	\label{equmu}
\end{eqnarray}
Hence, we obtain the reduced gradient at $u_k$ given by $\nabla \hat J(u_{k})=\alpha u_k+\mu_{k+1}$.

In order to solve the nonlinear problem \eqref{equU}, we 
apply an iterative fixed-point procedure where each step reads $U_{k+1}^{(0)} \to U_{k+1}^{(1)}$ starting with the initial guess $U_{k+1}^{(0)}=U_k$. We have 
\begin{eqnarray}
	- \Delta U_{k+1}^{(1)}=\Phi(U_{k+1}^{(0)},u_k),\quad  \mbox{ in }\quad \Omega, \quad U_{k+1}^{(1)}=0 \text{ on } \quad\partial \Omega.
	\label{equU1}
\end{eqnarray}
For the approximation of the equations \eqref{equU1}-\eqref{equp}-\eqref{equmu}, we extend to our models the finite-difference framework for Poisson problems developed by A.~S.~Reimer and A.~F.~Cheviakov \cite{RAC13}. 

For the sake of {self-consistency}, we illustrate this framework in the case of Equation \eqref{equU1}.  
{We investigate the spatially two-dimensional case, with the unitary circle as a domain, using the polar coordinates $\varphi$ and $r$.	
In this case equation \eqref{equU} reads
\begin{eqnarray*}
\frac{\partial^2 U_{k+1}}{\partial r^2}+\frac{1}{r}\frac{\partial U_{k+1}}{\partial r}+\frac{1}{r^2}\frac{\partial^2 U_{k+1}}{\partial \varphi^2}=
\Phi(U_{k+1},u_k).
\end{eqnarray*}
Similar expressions hold for \eqref{equp} and \eqref{equmu}. The angular and polar coordinates are discretized as follows
}:  
\begin{eqnarray*}
	\varphi_i&=&\frac{i-1}{N}\,2\pi,\,\,{i=1,\dots, N,}\,\,{\textrm {with}}\,\,N=150,\\
	r_j&=&-2\Bigg[0.5\Big(\frac{j-1}{M-1}\Big)^2-\frac{j-1}{M-1}\Bigg],\,\,{j=1,\dots, M,}\,\,{\textrm {with}}\,\,M=100,
\end{eqnarray*}  	
with the discretization in $r$ being denser near the centre of the circle.

On this grid, the approximate solution consists of the $N\times M$-vector
\[
\left(
	U_{1 1}, \, 
	U_{2 1}, \,
	\ldots\\
	U_{N 1}, \, 
	U_{12} , \; 
	\ldots , 
	U_{N M} \right)^\top,
\]
with $U_{ij}\approx U(\varphi_i,r_j)$, $i=1,\dots, N$, $j=1,\dots, M$. The algebraic system representing the discretized version of \eqref{equU1}  is given by
\begin{equation}
-\sum_{\beta=1}^{N\times M}A_{\alpha\beta} \, U_\beta=f_\alpha,
\label{discr1}
\end{equation}
where $\alpha=(i,j)$, $\beta=(k,l)$, $i,k\in\{1, \dots, N\}$, $j,l\in\{1, \dots, M\}$.
The matrix $A$ is the discretized version of the Laplace operator obtained by using central differences and it takes into account also the homogeneous Dirichlet boundary conditions. 
For $j\not\in \{1, M\}$, the only elements of $A$ different form zero are
\begin{eqnarray*}
	A_{(i,j;i,j)}&=&\frac{2r_j^2}{\Delta r_j+\Delta r_{j-1}}\Bigg(\frac{1}{\Delta r_{j-1}}-\frac{1}{\Delta r_{j}}\Bigg)
	-\frac{2}{\Delta \varphi_i+\Delta \varphi_{i-1}}\Bigg(\frac{1}{\Delta \varphi_{i-1}}+\frac{1}{\Delta \varphi_{i}}\Bigg),\\
	A_{(i,j;i+1,j)}&=&\frac{2}{(\Delta \varphi_i+\Delta \varphi_{i-1})\Delta \varphi_{i}},\\
	A_{(i,j;i-1,j)}&=&\frac{2}{(\Delta \varphi_i+\Delta \varphi_{i-1})\Delta \varphi_{i-1}},\\
	A_{(i,j;i,j+1)}&=&\frac{2r_j^2}{(\Delta r_j+\Delta r_{j-1})\Delta r_{j}}+\frac{r_j}{\Delta r_j+\Delta r_{j-1}},\\
	A_{(i,j;i,j-1)}&=& \frac{2r_j^2}{(\Delta r_j+\Delta r_{j-1})\Delta r_{j-1}}-\frac{r_j}{\Delta r_j+\Delta r_{j-1}},
\end{eqnarray*}
with $\Delta \varphi_i=\varphi_i-\varphi_{i-1},\,i=2,\dots, N$, $\Delta r_j=r_j-r_{j-1},\,j=2,\dots, M$, while
$$f_\alpha=r_j^2
\Phi(U_{k+1}^{(0)}(\varphi_i,r_j),u_k(\varphi_i,r_j)),
\,i=1,\dots, N,\, j\not\in\{1,M\}.$$

At the origin, the polar coordinates have an apparent singularity, which can be treated by means of the approximation 
$$
(\Delta U)(0,0)\approx \frac{1}{|S|}\int_{\partial S}div(\nabla U)\,dl\approx\frac{1}{\pi(\frac{\Delta r_1}{2})^2}\sum_{i=1}^{N}\Big(U_{i2}-U_{i1}\Big)
\Big( \frac{\Delta\varphi_i+\Delta\varphi_{i-1}}{4}\Big),
$$
where $S$ is the circle of radius $\frac{1}{2}\Delta r_1$, $|S|$ its area, $dl=\frac{1}{2} \Delta r_1\Delta \varphi$ and $\Delta \varphi_0=2\pi-\varphi_N$, see \cite{RAC13} for details. Considering this approximation and the condition at the origin: 
$$
U_{11}=U_{21}=\dots=U_{N1},
$$
which must be added since the value of $U$ in the centre is independent of $\varphi$, 
 one can find that
\begin{eqnarray*}
	A_{(1,1;1,1)}&=&-\pi,\quad A_{(1,1;i,2)}= \frac{\Delta\varphi_i+\Delta\varphi_{i-1}}{4},\\
	A_{(i,1;1,1)}&=&-1,\quad A_{(i,1;i,1)}=1,\quad i=2,3,\dots, N,
\end{eqnarray*}
and
\begin{eqnarray*}
 f_{(1,1)}= \pi\Big(\frac{\Delta r_1}{2}\Big)^2\,\Phi(U_{k+1}^{(0)}(0),u_k(0)).
\end{eqnarray*}
Finally, we have
$$
A_{(i,M;i,M)}=1, \quad i=1, 2, \dots, N,
$$
which stem from the Dirichlet homogeneous boundary conditions.
We remark that all other entries of $A$ and $f$ are null.

{As regards the discretized version of \eqref{equp} and \eqref{equmu}, the matrices of elements 
$\partial_U \Phi(U_{k+1}(\varphi_i,r_j),u_{k}(\varphi_i,r_j))\delta_{i,j;k,l}$ and $-\delta_{i,j;k,l}$ have to be added to the above matrix $A$,
where $\delta_{i,j;k,l}$ is equal to $1$ if $i=k$ and $j=l$ and to $0$ otherwise.
Finally the right-had sides are given by the $f$ appearing in \eqref{discr1}, where in its expression the function $\Phi$ has to be replaced with $-V\partial_U \Phi$ and
$(V-p_{k+1})\partial_u \Phi$, respectively. }
\medskip
{Consistent with our approximation of the optimality system is 
the approximation of the functional $J$ by a midpoint/rectangular rule 
in the $\varphi$ coordinate, and a trapezoidal rule in the $r$ coordinate. 
This integration rule also defines our discrete $L^2$ scalar product.}

{We remark that our discretization of the 
elliptic differential operators and of the cost functional makes the 
approach of  optimize-then-discretize equivalent to the discretize-then-optimize strategy.
The former is our approach where we derive the optimality system and 
then discretize the differential equations appearing in it. In 
this procedure, we construct finite-differences approximations of the equations that 
have the same structural properties of their continuous counterpart. 
This can be seen considering the elements of the matrix $A$ given above, 
and the fact that other terms like $\partial_U \Phi$ and $V$ are 
represented and evaluated pointwise on the grid. One can verify that, by first 
discretizing $J$ and the Poisson-Boltzmann equation, and then deriving 
the optimality system with respect to the discrete variables involved, 
the same discretization of the optimality system is obtained.
}

\medskip

Now, we are ready to discuss a gradient-based procedure for solving 
our optimization problem. The basic procedure is steepest descent \cite{BorziSchulz2011}, where starting from an initial guess $u^0$ a sequence of 
approximations $(u^k)$ is obtained as follows
$$
u^{k+1} = u^k - s_k \, \nabla \hJ(u^k), \qquad k=0,1,2, \ldots ,
$$
where $s_k > 0$ is a stepsize given by some criteria or determined 
by a linesearch procedure. Notice that at each step, the gradient 
for the given $u^k$ must be computed. This is done in the 
way illustrated by the following algorithm defined at the 
continuous level. 

\vspace{0.3cm}
{
\begin{algorithm}[H]
\caption{Calculate the gradient $ \nabla \hJ(u) |_{H^1}(x)$}
\label{algo:CalculateGradient}
\begin{algorithmic}[1]
\REQUIRE $u(x)$
\STATE Solve the governing model with given $u(x)$
\STATE Solve adjoint problem with given $u(x)$ and $U(x)$
\STATE Compute $\nabla \hJ(u) |_{H^1}(x) $ 
\end{algorithmic}
\end{algorithm}}

\vspace{0.3cm}

We remark that this algorithm can be used to implement many 
different gradient-based optimization schemes \cite{BorziSchulz2011}. 
In our case, we choose the non-linear conjugate gradient (NCG) method. 
This is an iterative method that constructs a minimizing sequence of 
control functions $(u^k)$ as illustrated by the following algorithm.

\vspace{0.3cm}

\begin{algorithm}[H]
\caption{NCG scheme}
\label{algo:OptAlgo}
\begin{algorithmic}[1]
\REQUIRE $u^0(t)$
\STATE $k = 0$, $E > tol$
\STATE Compute $d^0= - \nabla \hJ(u^0) |_{H^1}$ using Algorithm \ref{algo:CalculateGradient}
\WHILE{$E >tol$ \AND $k < k_{\max}$}
\STATE Use a linesearch scheme to determine the step-size $s_k$ along $d^k$
\STATE Update control: $u^{k+1} = u^k + s_k \, d^k$
\STATE Compute $g^{k+1}= \nabla \hJ(u^{k+1}) |_{H^1}$ using Algorithm \ref{algo:CalculateGradient}
\STATE Compute $\beta_k$ using the Fletcher--Reeves formula
\STATE Set $d^{k+1} = - g^{k+1} + \beta_k \, d^k$
\STATE $E=\|u^{k+1}-u^k\|$
\STATE Set $k = k+1$
\ENDWHILE
\RETURN $(u^k,U^k)$
\end{algorithmic}
\end{algorithm}

\vspace{0.3cm}

In this algorithm, the tolerance $tol>0$ and the maximum 
number of iterations $k_{\max} \in \N$ are used as termination criteria. 
We use backtracking line-search with Armijo condition. The 
factor $\beta_k$ is based on the Fletcher -- Reeves formula:
$$
\beta_k = \frac{(g^{k+1} ,g^{k+1} ) }{(g^{k} ,g^{k} )}; 
$$
see \cite{BorziSchulz2011} for more details, alternative formulas, and references.

\vspace{0.3cm}

\section{Numerical experiments}
\label{sec-numexp}

In this section, we present results of experiments for 
three examples of confinement which differ by the choice of the valley function $V$.  
{This choice is dictated by the purpose of showing that the optimization framework works for valley functions that have increasing geometric complexity.}
We compare the resulting charge distribution with that corresponding to a zero external electric field as depicted in Figure \ref{fig_rho_0}.\\
\begin{figure}[ht!]
	\centering
	\includegraphics[width=0.45\textwidth]{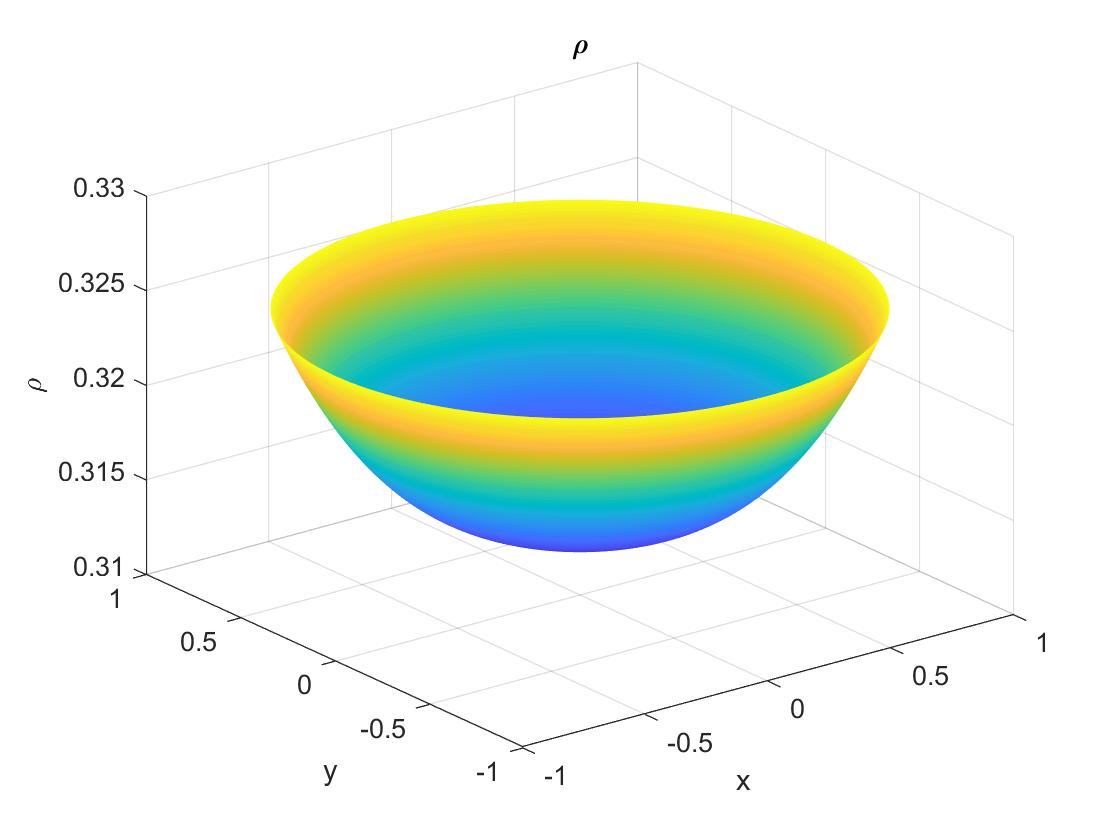}
	\caption{Charge distribution without external field, $u=0$.}
	\label{fig_rho_0}
\end{figure}

In Example 1, we take $V=-\exp{\Big(-\frac{r^2}{2(0.05)^2}\Big)}$. The algorithm converges to a minimum after $100$ iterates. The valley function and the resulting 
plasma distribution are depicted in Figure \ref{example_1_1}. In 
Figure \ref{example_1_2}, we plot the control field $u$ and the 
self-consistent potential $U$. 

\begin{figure}[ht!]
	\centering
	\includegraphics[width=0.45\textwidth]{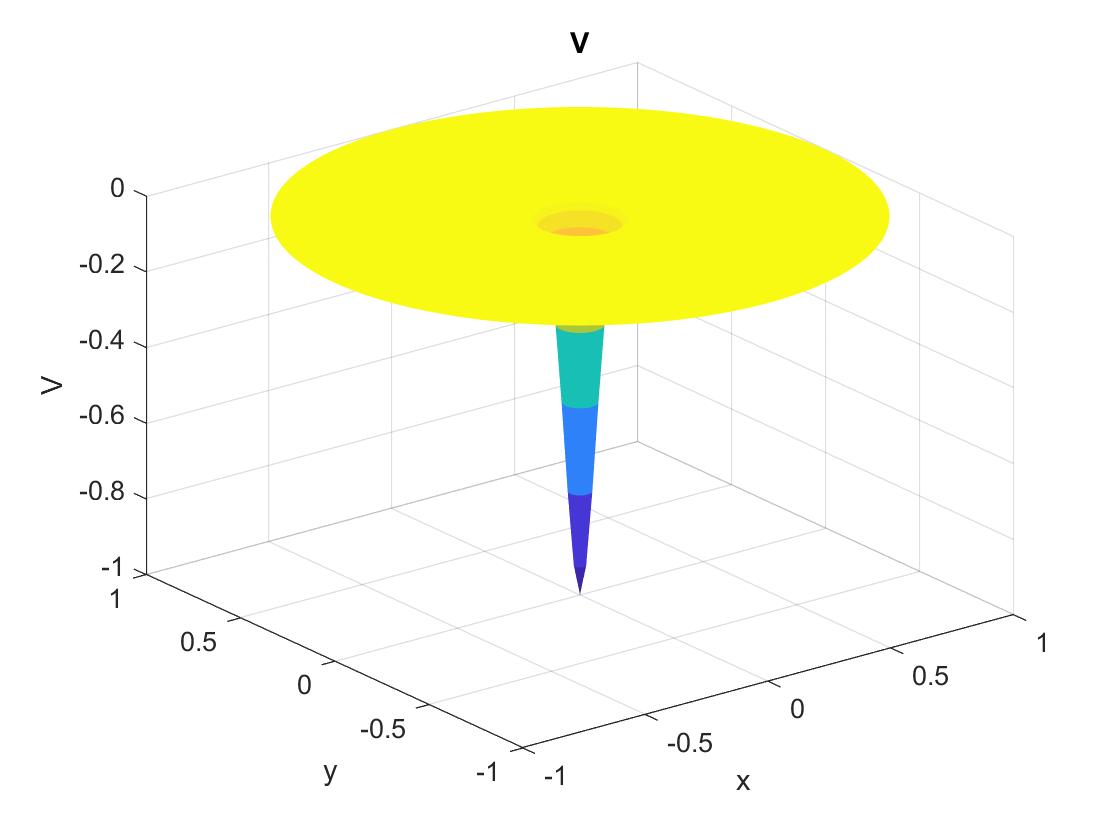}
	\includegraphics[width=0.45\textwidth]{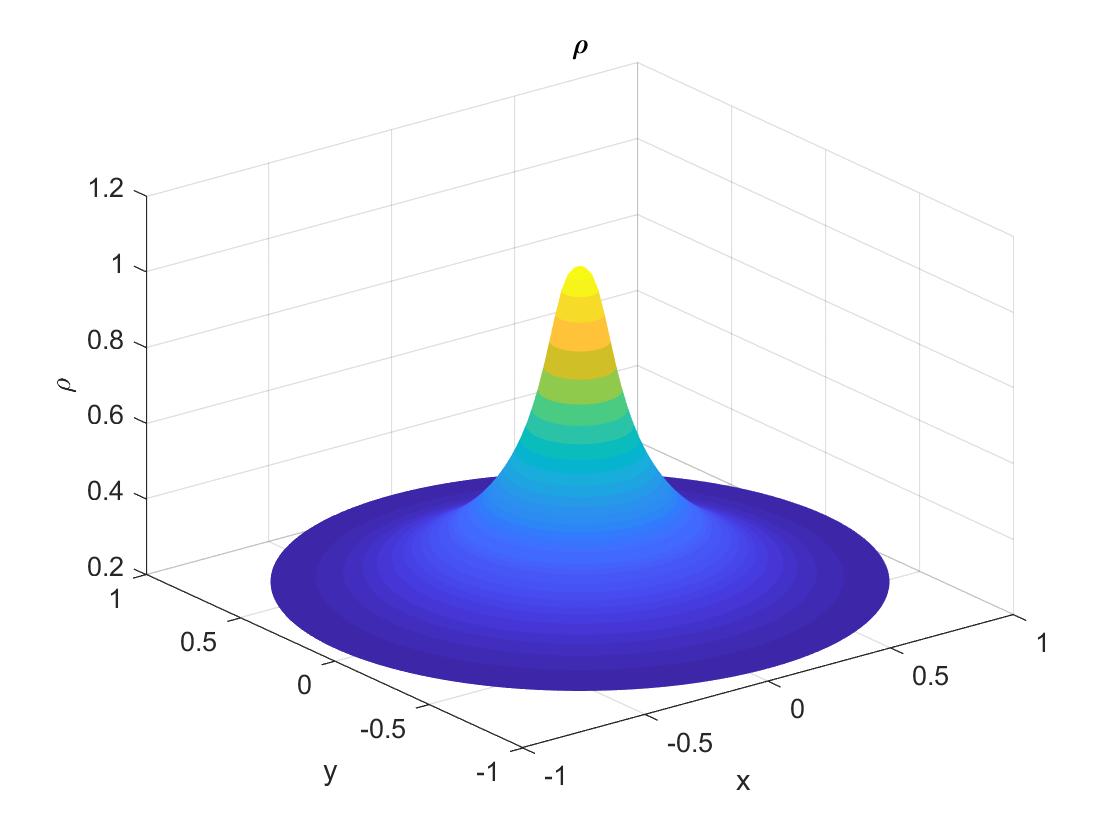}
	\caption{Left: Valley function. Right: Charge distribution. The figure refers to Example 1.}
	\label{example_1_1}
\end{figure}

\begin{figure}[ht!]
	\centering
	\includegraphics[width=0.45\textwidth]{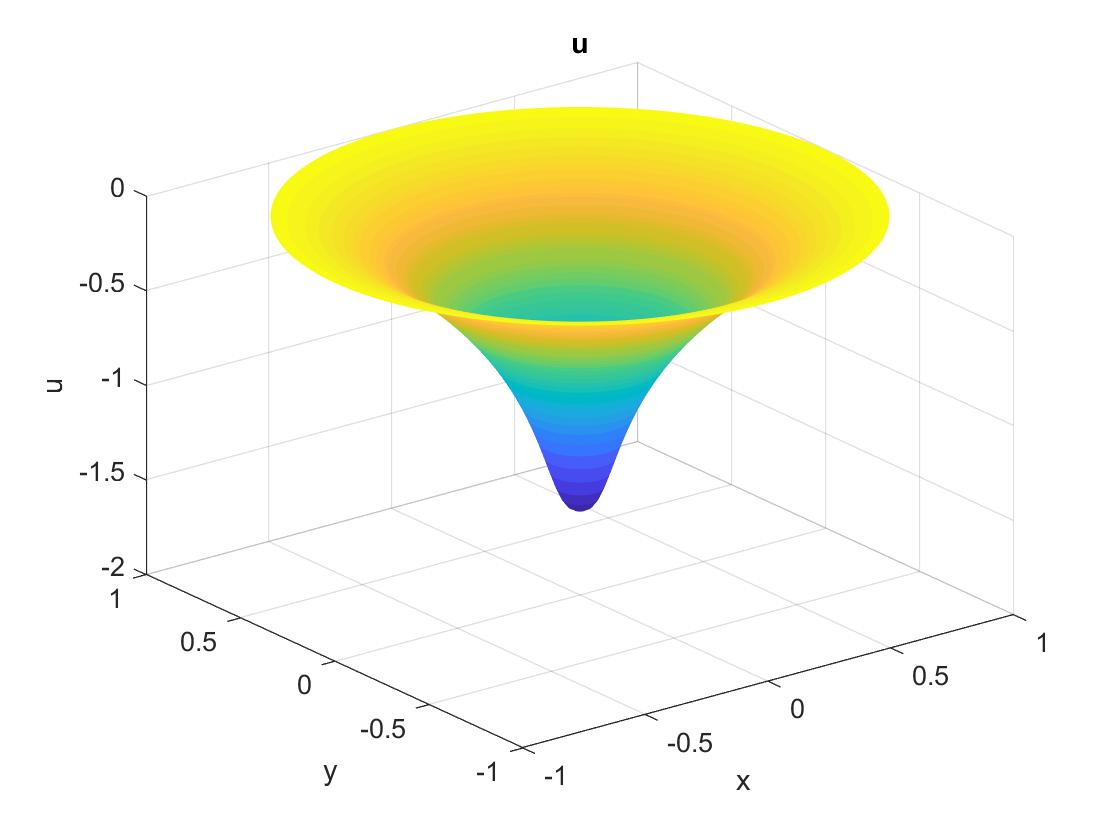}
	\includegraphics[width=0.45\textwidth]{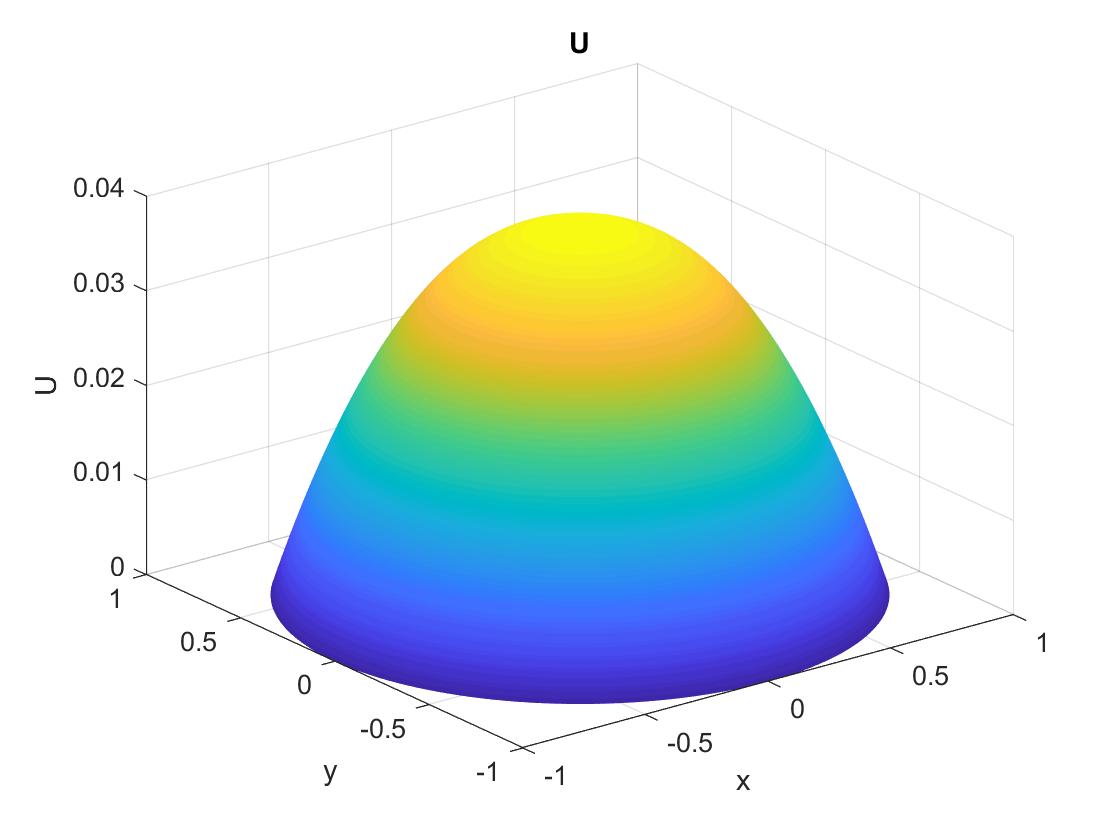}
	\caption{Left: External potential. Right: Self-consistent potential. The figure refers to Example 1.	}
	\label{example_1_2}
\end{figure}

In Example 2, we consider {$V=-\exp{\Big[-r^2\Big(\frac{\cos^2{(\varphi)}}{2(0.05)^2}+\frac{\sin^2{(\varphi)}}{2(0.3)^2}\Big)\Big]}$. The algorithm converges to a minimum after $61$ iterates. The valley function and the resulting 
plasma distribution are depicted in Figure \ref{example_2_1}. In 
Figure \ref{example_2_2}, we plot the control field $u$ and the 
self-consistent potential $U$. We stress that in this strongly anisotropic case the combined effect of the Coulomb repulsion and the action of the control field yields a bivariate charge distribution.}

\begin{figure}[ht!]
	\centering
	\includegraphics[width=0.45\textwidth]{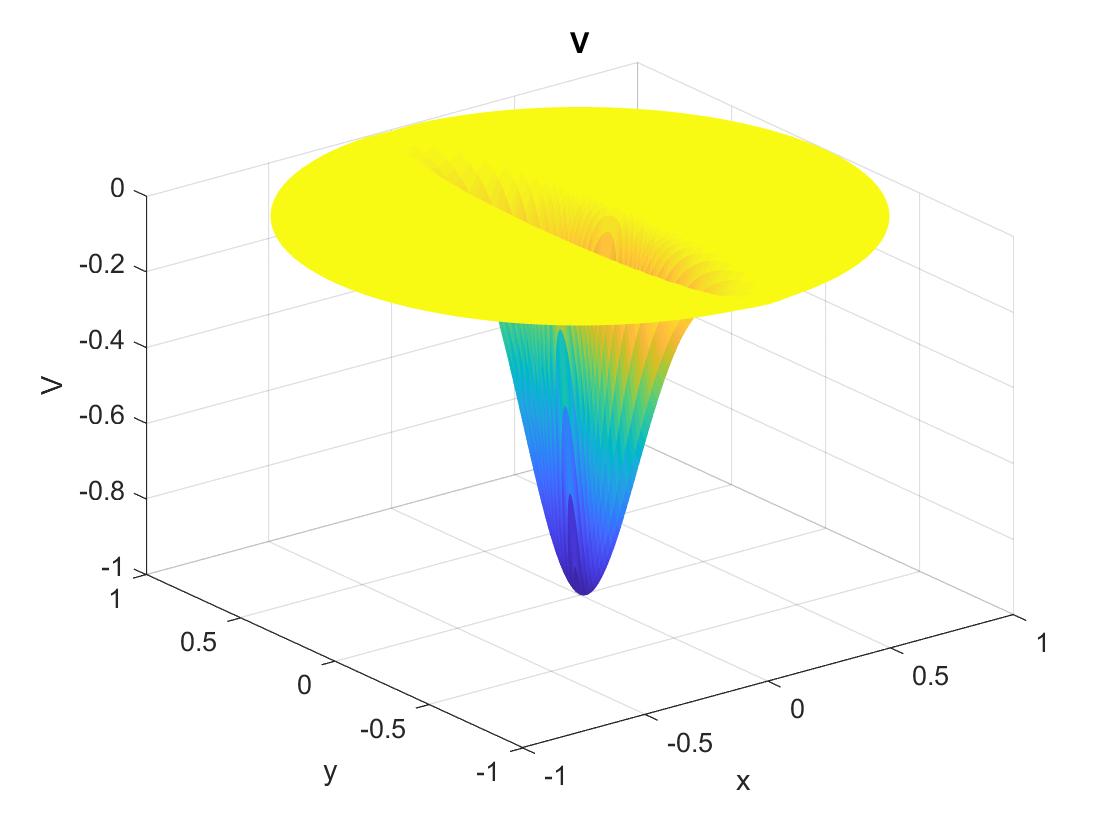}
	\includegraphics[width=0.45\textwidth]{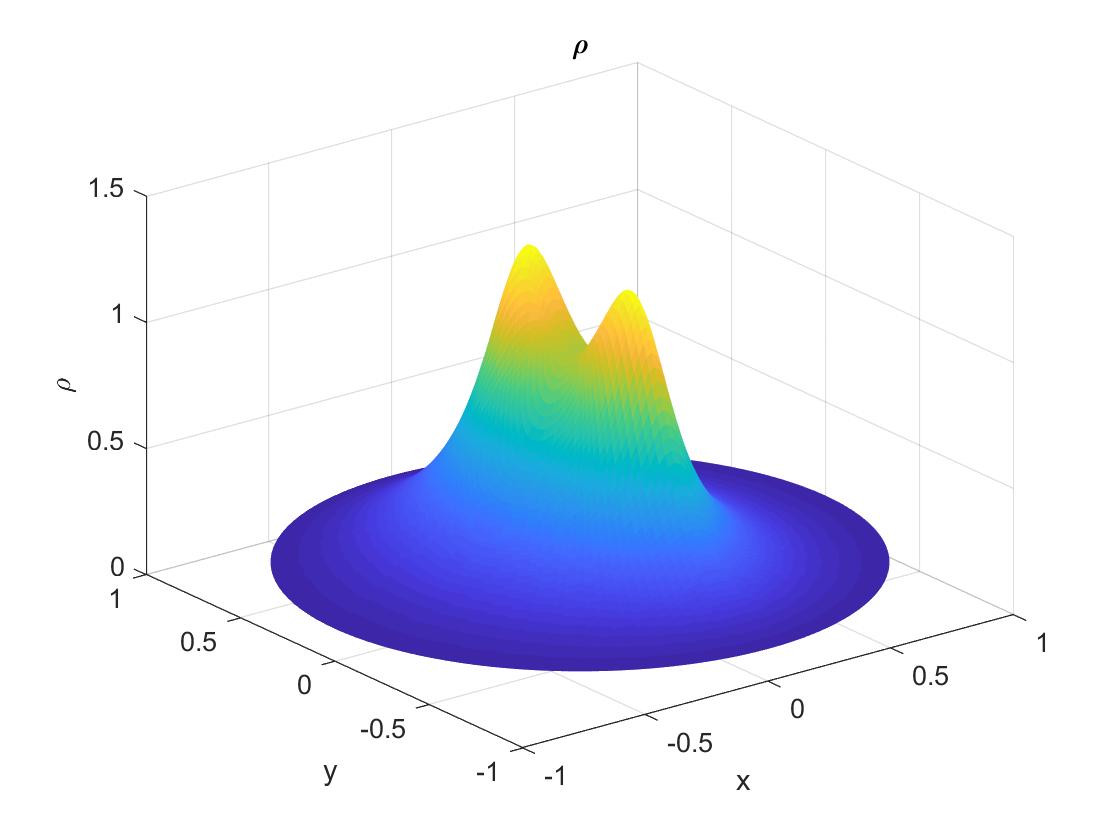}
	\caption{Left: Valley function. Right: Charge distribution. The figure refers to Example 2.	}
	\label{example_2_1}
\end{figure}
\begin{figure}[ht!]
	\centering
	\includegraphics[width=0.45\textwidth]{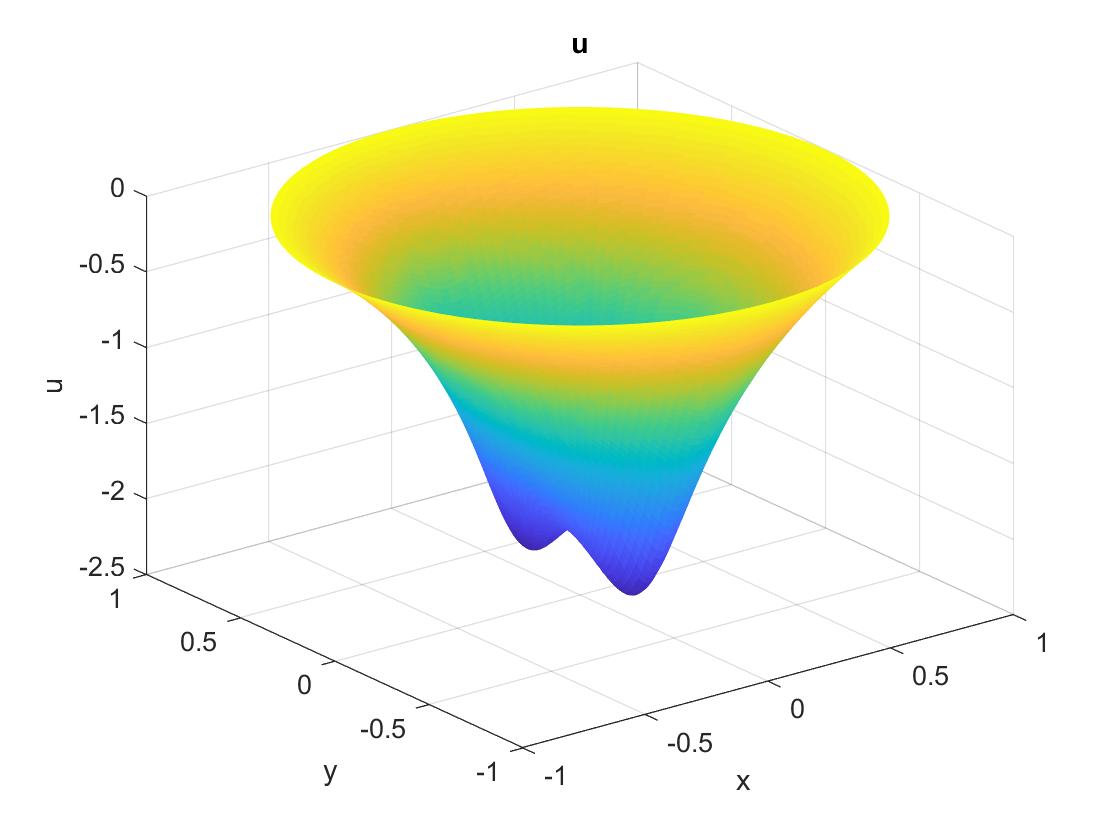}
	\includegraphics[width=0.45\textwidth]{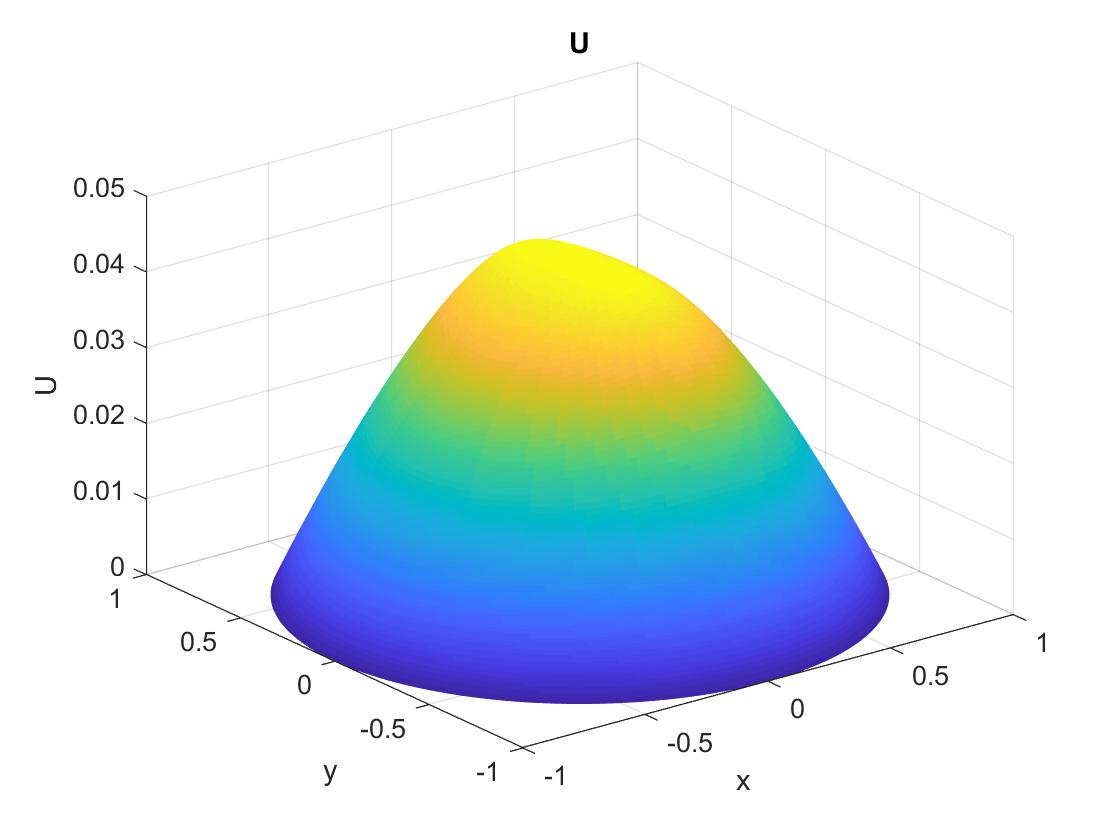}
	\caption{Left: External potential. Right: Self-consistent potential. The figure refers to Example 2.	}
	\label{example_2_2}
\end{figure}

In Example 3, we take a discontinuous valley potential with $V=-1$ inside the four-leaf clover and  $V=0$ outside it, see Fig.\ref{quadr}. The algorithm converges to a minimum after $66$ iterates. The valley function and the resulting 
plasma distribution are depicted in Figure \ref{example_3_1}. In 
Figure \ref{example_3_2}, we plot the control field $u$ and the 
self-consistent potential $U$. 

\begin{figure}[ht!]
	\centering
	\includegraphics[width=0.45\textwidth]{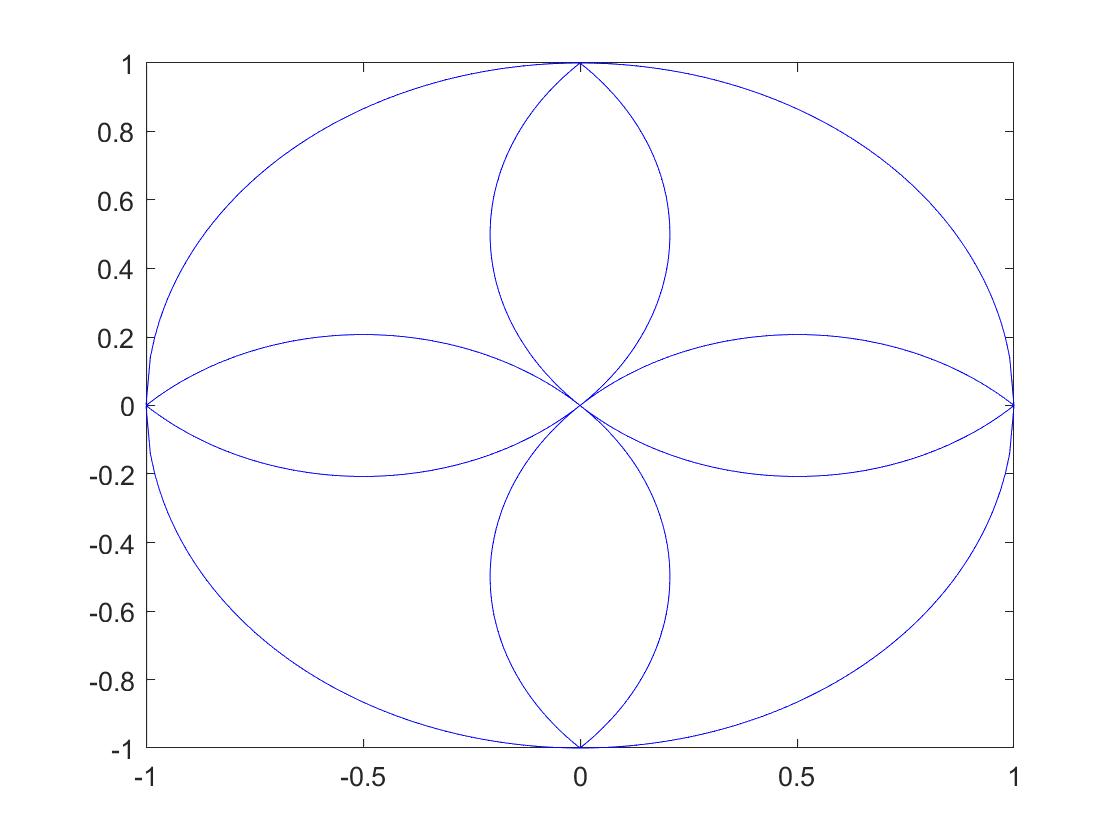}
	\caption{The cloverleaf inside of which $U=-1$ and outside of which $ U=0$.	}
	\label{quadr}
\end{figure}

\begin{figure}[ht!]
	\centering
	\includegraphics[width=0.45\textwidth]{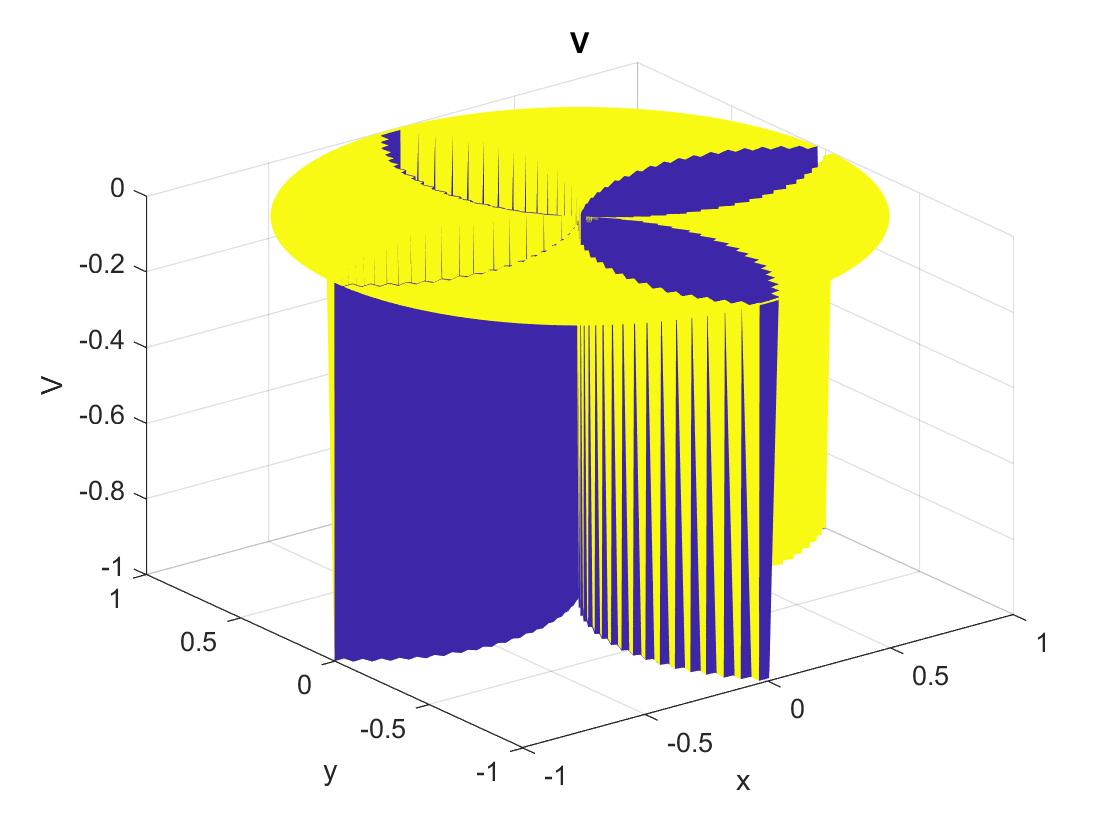}
	\includegraphics[width=0.45\textwidth]{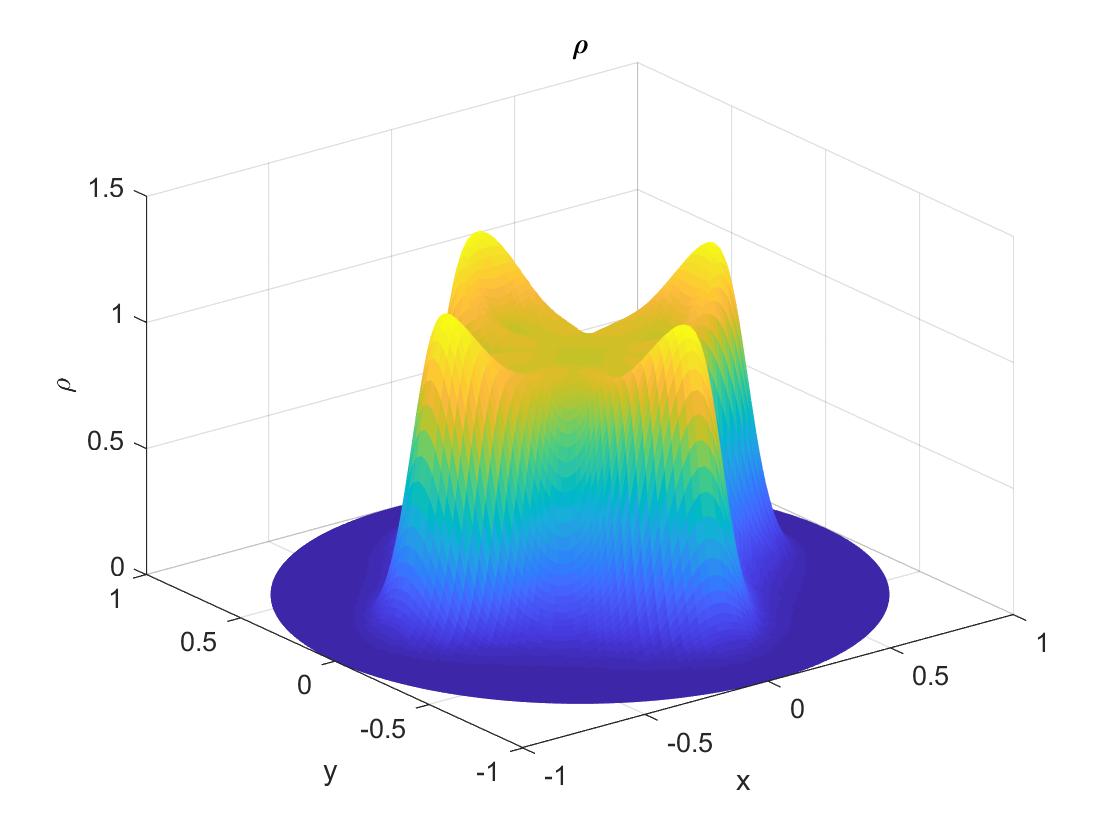}
	\caption{Left: Valley function. Right: Charge distribution. The figure refers to Example 3.	}
	\label{example_3_1}
\end{figure}
\begin{figure}[ht!]
	\centering
	\includegraphics[width=0.45\textwidth]{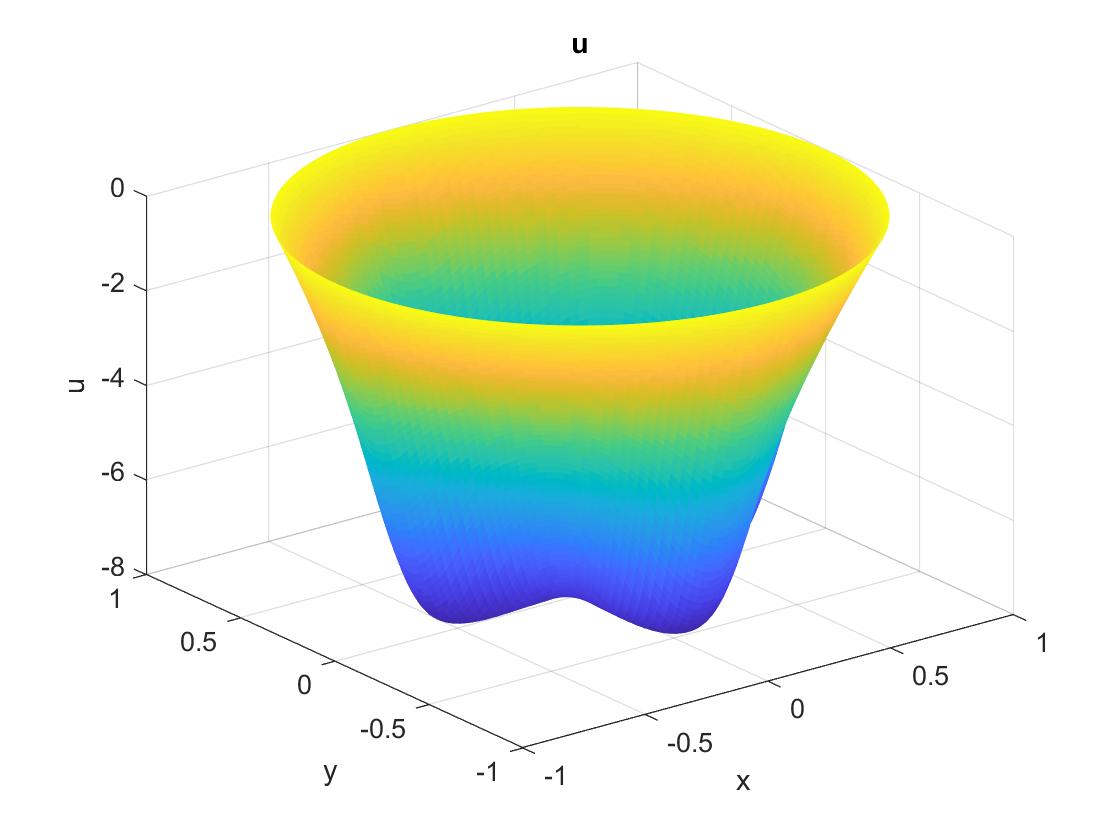}
	\includegraphics[width=0.45\textwidth]{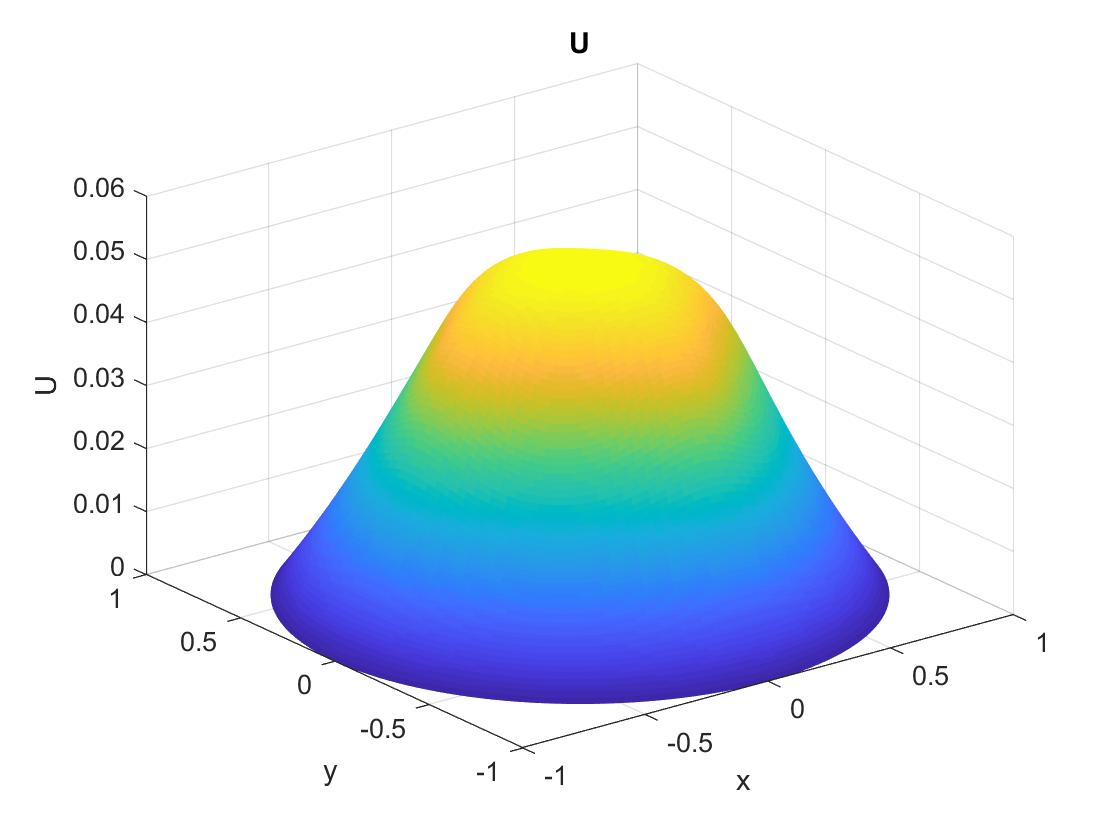}
	\caption{Left: External potential. Right: Self-consistent potential. The figure refers to Example 3.	}
	\label{example_3_2}
\end{figure}

We can see in the Figures \ref{example_1_1}, \ref{example_2_1} and \ref{example_3_1}, that in all cases, the action of the external control field 
concentrates the plasma around the region where the valley function reaches its lower values, and its distribution reflects the geometrical properties of this function. 
On the other hand, without a control field the charge is mainly concentrated near the boundary of the domain as shown in Fig. \ref{fig_rho_0}.

\section*{Conclusion}

This work was devoted to the formulation, theoretical investigation, 
and numerical solution of an optimal design problem for 
steady equilibrium solutions of the Vlasov-Poisson system where the 
design function is an external electric field. 

The resulting optimization framework required to minimize an ensemble 
objective functional with Tikhonov regularization of the control field 
under the differential constraint of the nonlinear 
elliptic Poisson-Boltzmann equation that models equilibrium solutions of the Vlasov-Poisson system. 
Existence of optimal control fields and their characterization as solutions 
to first-order optimality conditions were discussed, and numerical approximation 
and optimization schemes were developed in order 
to compute these solutions. Results of numerical experiments were 
presented that successfully validated the proposed framework. 

\section*{Acknowledgements}
{The authors would like to thank very much the three anonymous reviewers 
who have, with their comments and suggestions, helped improve the exposition 
and content of this manuscript.}

A. Borz\`i has been partially supported by the VIS Program of the University of Calabria. This paper was partialy written during some visits of  A. Borz\`i to the Dipartimento di Matematica e Informatica of the Universit\`a della Calabria.
G.~Infante is a member of the Gruppo Nazionale per l'Analisi Matematica, la Probabilit\`a e le loro Applicazioni (GNAMPA) of the Istituto Nazionale di Alta Matematica (INdAM) and of the UMI Group TAA  ``Approximation Theory and Applications''. G.~Mascali is a member of the Gruppo Nazionale per la Fisica Matematica (GNFM) of INdAM. G.~Infante and G.~Mascali are supported by the project POS-CAL.HUB.RIA.  G. Infante was partly funded by the Research project of MUR - Prin 2022 “Nonlinear differential problems with applications to real phenomena” (Grant Number: 2022ZXZTN2). G. Mascali acknowledges the support from MUR, Project PRIN
“Transport phonema in low dimensional structures: models, simulations and theoretical aspects”
CUP E53D23005900006.

\bibliographystyle{unsrt}
\bibliography{references}

\end{document}